\tikzset{invmidarrow/.style={postaction=decorate,decoration={markings,mark={between positions 0 and 1 step #1
with {\arrow{latex reversed}}}}}}
\numberwithin{equation}{section}
\newcommand{\ds}{\displaystyle}
\newcommand{\ZZ}{\mathbb{Z}}
\newcommand{\NN}{\mathbb{N}}
\newcommand{\RR}{\mathbb{R}}
\newcommand{\tA}{{\widetilde{A}}}
\newcommand{\tB}{{\widetilde{B}}}
\newcommand{\cC}{\mathcal{C}}
\newcommand{\tS}{{\widetilde{S}}}
\newcommand{\tR}{{\tilde{R}}}
\newcommand{\aver}[1]{\langle #1 \rangle}
\newcommand{\dens}[1]{\langle \eta_{#1} \rangle}
\newcommand{\vac}{\circ}
\newcommand{\occ}{\bullet}
\DeclareMathOperator{\ASEP}{f}
\DeclareMathOperator{\wt}{wt}
\newcommand{\hOmega}{{\widehat\Omega}}
\newcommand{\heta}{{\hat\eta}}
\newcommand{\var}{x}
\newcommand\tcb[1]{\textcolor{blue}{#1}}
\newtheorem{thm}{Theorem}
\newtheorem{prop}[thm]{Proposition}
\newtheorem{cor}[thm]{Corollary}
\newtheorem{lem}[thm]{Lemma}
\newtheorem{rem}[thm]{Remark}
\let\origmaketitle\maketitle
\def\maketitle{
  \begingroup
  \def\uppercasenonmath##1{} % this disables uppercasing title
  \origmaketitle
  \endgroup
}
\title
[The multispecies PushTASEP: Dynamics and symmetry]
{The inhomogeneous multispecies PushTASEP: Dynamics and symmetry
}
\author{Arvind Ayyer}
\address{Arvind Ayyer, Department of Mathematics, 
Indian Institute of Science, Bangalore  560012, India.}
\email{arvind@iisc.ac.in}
\author[J.~B.~Martin]{James B.\ Martin}
\address{James B.~Martin, Department of Statistics, University of Oxford, UK}
\email{martin@stats.ox.ac.uk}
\date{\today}
\begin{document}

\begin{abstract}
We introduce and study a natural multispecies variant of the inhomogeneous PushTASEP with site-dependent rates on the finite ring. We show that the stationary distribution of this process is proportional to the ASEP polynomials at $q = 1$ and $t = 0$. 
This is done by constructing a multiline process which 
projects to the multispecies PushTASEP, 
and identifying its stationary distribution using
time-reversal arguments. We also study 
symmetry properties of the process under interchange of the 
rates associated to the sites. These results hold not just 
for events depending on the configuration at a single time 
in stationarity, but also for systems out of equilibrium and 
for events depending on the path of the process over time.
Lastly, we give explicit formulas for nearest-neighbour two-point correlations in terms of Schur functions.
\end{abstract}

\subjclass[2010]{60J10, 82B20, 82B23, 82B44, 33D52, 05A10}
\keywords{PushTASEP, multispecies, inhomogeneous, multiline, TASEP, interchangeability}

\maketitle

\section{Introduction}

The multispecies asymmetric simple exclusion process (ASEP)
is an interacting particle system originating in statistical mechanics
and probability, with significant connections to different areas of mathematics such as combinatorics and representation theory. 
It has been the focus of considerable study in the last few decades. 
The stationary distribution of the multispecies TASEP (i.e.\ 
ASEP with asymmetry parameter $t=0$) on a ring of size $n$ was 
described by Ferrari and Martin~\cite{ferrari-martin-2007} in terms of multiline diagrams. The connection between the multispecies ASEP with content $\lambda$ and the Macdonald polynomial $P_\lambda(x_1, \dots, x_n; q, t)$ was realised by 
Cantini--de Gier--Wheeler~\cite{cantini-degier-wheeler-2015} and 
by Chen--de Gier--Wheeler~\cite{chen-degier-wheeler-2020}. In these works, they define ASEP polynomials (which are called relative Macdonald polynomials in~\cite{guo-ram-2021}, and 
which are special cases of the
permuted basement Macdonald polynomials studied in \cite{ferreira-2011,alexandersson-2019}).
These are polynomials in variables $x_1, \dots, x_n$ whose coefficients are rational functions in $q, t$, which, for $q=1$ and all $x_i$ equal, 
are proportional to the stationary distribution of the multispecies ASEP. Later on, Corteel--Mandelshtam--Williams~\cite{corteel-mandelshtam-williams-2022} showed that the ASEP polynomials can be computed using the multiline diagrams stated above by assigning a weight depending on $q, t$ and the variables $x_1, \dots, x_n$.

The ASEP itself is not thought to have nice algebraic 
properties when the jump rates are allowed to differ between sites -- 
but it is natural to ask if there is some multispecies particle system 
with site-wise inhomogeneity 
whose stationary distribution is proportional to the ASEP polynomial 
at $q=1$ for general values of the parameters $x_1, \dots, x_n$. 

In this and a companion paper \cite{ayyer-martin-williams-2024}, we show that this is indeed the case, for a multispecies inhomogeneous version of the \textit{PushTASEP}, with deformation parameter $t\geq 0$. The case of general $t$ is covered
in \cite{ayyer-martin-williams-2024}. In this work, we focus on 
various interesting aspects of the particular case $t=0$. 

The arguments in \cite{ayyer-martin-williams-2024} involve
developing relationships between the ASEP polynomials and the 
non-symmetric Macdonald polynomials. 
Here for the case $t=0$, we explain an alternative proof of a 
different nature, constructing a Markov chain on multiline
diagrams whose 
bottom row projects to the PushTASEP process, and whose stationary distribution is proportional to the weight defined 
in \cite{corteel-mandelshtam-williams-2022}. This 
extends the approach of 
\cite{ferrari-martin-2006} 
to the inhomogeneous setting. 

A particular focus is on symmetry properties 
under interchange of the rates between different sites. 
We show that probabilities of events depending
on a given interval of sites remain unchanged
when the rates at other sites are permuted. 
Such results are proved for the general case $t\geq 0$
in \cite{ayyer-martin-williams-2024},
for events depending on the configuration at a single time
in the stationary distribution.
In the special case $t=0$ we can prove significantly
stronger properties, applying to systems out of equilibrium,
and to events which depend not just on a single time but on 
the path of an evolving process. These results are obtained 
by using coupling methods to prove
interchangeability results for PushTASEP stations 
analogous to those previously proved for exponential queueing servers
\cite{Weber1979,TsoucasWalrand1987,MartinPrabhakar2010}.

Finally we study various observables for the system 
in stationary, such as the density of particles of 
a given species at a given site, and the current between sites
for particles of a given species. We give explicit formulas for nearest-neighbour two-point correlations in terms of Schur functions.

The PushTASEP, although somewhat less widely known than its cousin
the TASEP, has nonetheless been widely studied in recent years. 
Spitzer~\cite{spitzer-1970} introduced it as an example of a \emph{long-range exclusion process}, but more recently the name PushTASEP  has become established.
It has often been studied on the infinite lattice $\mathbb{Z}$ -- see for example \cite{petrov-2020} for extensive references -- 
but there have been a few studies on finite graphs as well~\cite{ayyer_schilling_steinberg_thiery.sandpile.2013,ayyer-2016}.
The multispecies case was already considered in \cite{ferrari-martin-2006}, under the guise of the discrete-space Hammersley--Aldous--Diaconis
process, to which it is equivalent by particle-hole duality.
A related multispecies process in discrete time (dubbed the ``frog model'') was recently used by Bukh--Cox~\cite{bukh-cox-2022}
to study problems involving the longest common subsequence between a periodic word and a word with i.i.d.\ uniform entries. 
For an inhomogeneous version on the line, limit shape and fluctuation results are obtained by Petrov \cite{petrov-2020}. The multispecies inhomogeneous model with $t>0$ was studied on an open interval by Borodin and Wheeler \cite[Section 12.5]{borodin-wheeler-2022}.

As this project was being completed, related work by 
Aggarwal, Nicoletti and Petrov appeared \cite{aggarwal-nicoletti-petrov-2023}, which 
studies an inhomogeneous $t$PushTASEP on the ring 
(in fact, a more general model in which the sites can have
capacities greater than $1$). They obtain a description
of the stationary distribution of such systems in terms
of ``queue vertex models'' on the cylinder -- these objects
are closely related to multiline diagrams (and to matrix
product formulae). Their methods are very different to ours, 
making extensive use of the Yang-Baxter equation. 

\subsection{Main results}

We now describe the model we study, an
inhomogeneous PushTASEP on a 
ring of size $n$, with $s$ species of particle (called $1,2,\dots, s$).

We start with the single species case, i.e.\ $s=1$. 
Each site contains either a single particle of type $1$, 
or a vacancy. The number of particles will be conserved by the 
dynamics -- write $m_1$ for the number of particles, and $m_0:=n-m_1$
for the number of vacancies. Then a configuration
$\eta=(\eta_j, 1\leq j \leq n)$ is a tuple of length $n$ containing
$m_1$ $1$s and $m_0$ $0$s, where $\eta_i=1$ if the configuration
has a particle at $i$. 

We have positive real parameters $\var_1, \dots, \var_n$,
which we think of as attached to the sites $1,2,\dots, n$. 
The system is a Markov chain whose rates can be described as follows.
At each site $j$, a bell rings at rate $1/\var_j$.
When this bell rings, if $j$ is vacant, nothing changes. If
$j$ is occupied by a particle, that particle moves to the first vacant site clockwise from $j$, leaving a vacancy at $j$ itself.

Now we move to the general case of $s$ species. We denote
$\eta_j=r$ if the configuration $\eta$ has a particle of type $r$
at site $j$, 
where $1\leq r\leq s$, and $\eta_j=0$ if site $j$ is vacant. 
The number of particles of each type will be conserved by the 
dynamics: write $m_r$ for the number of particles of species $r$
for $1\leq r\leq s$ -- now the number of vacancies is $m_0
= n-\sum_{r=1}^s m_r$. 

Equivalently we can describe the particle content 
as a partition (i.e.\ a weakly decreasing tuple of non-negative integers)
of length $n$,
given by 
\[
\lambda = (\underbrace{s, \dots, s}_{m_s}, \dots,
\underbrace{1, \dots, 1}_{m_1},
\underbrace{0, \dots, 0}_{m_0}),
\]
It will also be convenient to write $\lambda$ in 
frequency notation as $\lambda = \langle
0^{m_0}, 1^{m_1}, \dots, s^{m_s} \rangle$.
Unless otherwise stated, we will fix throughout 
some partition $\lambda$ giving the content of the system,
with $s$ species of particles and length $\lambda$. 

A configuration is then a permutation of $\lambda$,
and may be thought of as a composition (i.e.\ a tuple of non-negative integers). We write $\Omega_\lambda$ for the set
of all configurations, which is the state-space of 
the Markov chain. For example,
\[
\Omega_{(2,1,0)} = \{210, 201, 120, 102, 021, 012\}.
\]

The 
dynamics of the multispecies chain are as follows;
the idea is that 
higher-numbered species are ``stronger'' than 
lower-numbered species, and can displace them. 
As above, a bell rings at each site $j$ at rate $1/\var_j$. 
When such a bell rings, if $\eta_j=0$, i.e.\ the site $j$ is vacant, nothing changes. 
Otherwise, suppose $j$ contains a particle of species $i_1$. 
Then this particle moves to the first location $j_2$ clockwise 
from $j$ with $i_2:=\eta_{j_2}<i_1$. If this site $j_2$ 
was previously vacant (i.e.\ $i_2=0$), the jump is complete; 
otherwise the particle of type $i_2$ itself moves clockwise 
until it finds a site $j_3$ with $\eta(j_3)<i_2$. This game of `musical chairs' continues, with stronger particles displacing weaker ones in turn, until a vacancy is found. All the particles involved move 
instantaneously to their chosen positions, leaving a 
vacancy at site $j$. 

An important observation is that the multispecies dynamics with
$s$ species of particle can be seen as a coupling
(the so-called ``basic coupling'') of $s$ single-type
PushTASEP processes. These can be obtained by regarding
all species of type $r$ or higher as ``particles'' 
and all species of type $r-1$ or lower as ``vacancies'', 
for any $1\leq r\leq s$. See \cref{prop:colouring} at the end of \cref{sec:single}
for more details. 

To illustrate the dynamics, consider the configuration $\eta = (2, 0, 1, 4, \allowbreak 2, 0, 3, 1)$ on $n = 8$ sites with $s = 4$ species. Then
for example 
\begin{equation}
\label{multilrep-trans-eg}
\eta \to 
\begin{cases}
(2,0,1,0,4,2,3,1) & \text{with rate $1/\var_4$}, \\
(2,1,1,4,2,0,0,3) & \text{with rate $1/\var_7$}.
\end{cases}
\end{equation}
If a bell rings either at site $2$ or at site $6$ rings,
nothing changes. 

The transition graph of the multispecies PushTASEP on $\Omega_{(2,1,0)}$ is given in \cref{fig:eg123}.

\begin{center}
\begin{figure}
\includegraphics[scale=0.5]{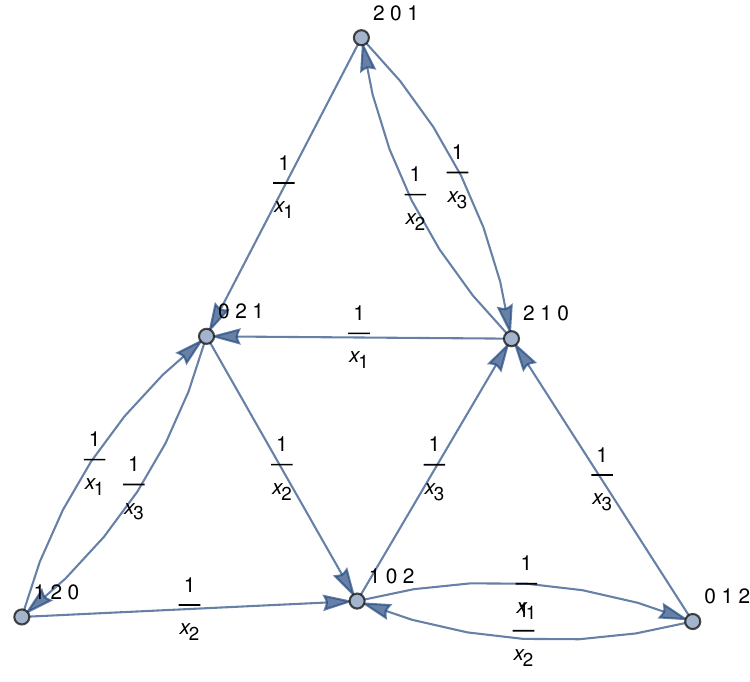}
\caption{The transition graph of the multispecies PushTASEP with 
$\lambda = (2,1,0)$.}
\label{fig:eg123}
\end{figure}
\end{center}

Note that when a bell rings at a given site in a given configuration,
the transition it triggers is deterministic. (This will no longer
be true in the case $t>0$.)

We start with an explicit formula for the stationary distribution. To state it, we first recall that the
{\em elementary symmetric polynomial} $e_m$ for $m \leq n$ is given by
\begin{equation}
\label{def-elemsymfn}
e_m(x_1, \dots, x_n) = \sum_{1 \leq i_1 < \cdots < i_m \leq n}
x_{i_1} \dots x_{i_m}.
\end{equation}
For a partition $\mu$, we define $e_\mu = \prod_i e_{\mu_i}$.
We also need the notion of the \emph{ASEP polynomials}, first defined in Cantini--de Gier--Wheeler~\cite{cantini-degier-wheeler-2015} 
and later named as such by Chen--de Gier--Wheeler~\cite{chen-degier-wheeler-2020}, denoted $\ASEP_\eta(x_1, \allowbreak  \dots, x_n; q, t)$. These are important nonsymmetric polynomials related to the well-known symmetric Macdonald polynomials $P_\lambda(x_1, \dots, x_n; q, t)$.
The ASEP polynomials, indexed by compositions $\eta$, are 
polynomials in the variables $x_1, \dots, x_n$,
whose coefficients are rational functions in $q, t$.
We express the stationary distribution for a system with content 
$\lambda$ using the family of ASEP polynomials $\ASEP_\eta$ where $\eta$ is a permutation of $\lambda$.

\begin{thm}
\label{thm:multilrep-statprob}
Let $\lambda=\langle
0^{m_0}, 1^{m_1}, \dots, s^{m_s} \rangle$ be a partition, 
and set $M_i = m_{i} + \cdots + m_s$ for $1 \leq i \leq s$.
The stationary distribution $\pi$ of
the multispecies PushTASEP with content $\lambda$ is given by 
\[
\pi(\eta) = \frac{\ASEP_\eta(\var_1, \dots, \var_n; q = 1, t = 0)}{Z_\lambda},
\]
for $\eta\in\Omega_\lambda$,
where $\ASEP_\eta$ is the ASEP polynomial and
\[
Z_\lambda(\var_1,\dots,\var_n) = P_\lambda(\var_1, \dots, \var_n; 1, 0) = 
\prod_{i=1}^{s-1} e_{M_i}(\var_1,\dots,\var_n)
\]
is the {\em partition function}.
\end{thm}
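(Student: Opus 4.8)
The plan is to lift the multispecies PushTASEP to a Markov chain on multiline diagrams, along the lines announced in the introduction. Let $\hOmega_\lambda$ be the set of multiline diagrams of content $\lambda$ (as in~\cite{ferrari-martin-2007}) each equipped with the weight $\wt(M)$ of~\cite{corteel-mandelshtam-williams-2022}, specialised to $q=1$, $t=0$ and $z_i=\beta_i$. At this specialisation the pairing/labelling procedure is deterministic, so $\wt(M)$ is a monomial in $\beta_1,\dots,\beta_L$ --- a product of one factor $\beta_c$ for each ball, indexed by its column $c$, over the free rows --- and the reading map $\rho\colon\hOmega_\lambda\to\Omega_\lambda$ returning the labelled configuration on the bottom row of $M$ is a well-defined function. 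By the main result of~\cite{corteel-mandelshtam-williams-2022} one has $\sum_{M\colon\rho(M)=\eta}\wt(M)=\ASEP_\eta(\beta_1,\dots,\beta_L;q=1,t=0)$ and $\sum_{M\in\hOmega_\lambda}\wt(M)=P_\lambda(\beta_1,\dots,\beta_L;1,0)$. The theorem will therefore follow once we exhibit a continuous-time Markov chain on $\hOmega_\lambda$ that (i) projects through $\rho$ onto the multispecies PushTASEP with rates $1/\beta_j$, and (ii) has $\widehat\pi(M)\propto\wt(M)$ as a stationary distribution; the product formula for $Z_\lambda=P_\lambda(\beta_1,\dots,\beta_L;1,0)$ then drops out by summing the monomial weights over the free rows independently, a free row carrying $N_i$ balls contributing exactly $e_{N_i}(\beta_1,\dots,\beta_L)$.

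For the dynamics on $\hOmega_\lambda$, I would let a bell ring at column $j$ at rate $1/\beta_j$ and, when it rings, perform a PushTASEP step at column $j$ in the top row and propagate the change downward through the pairing, row by row; since at $t=0$ the pairing is deterministic, the resulting update is a deterministic function of the pair $(M,j)$. This is the inhomogeneous counterpart of the multiline dynamics of~\cite{ferrari-martin-2006}. Establishing (i) is then a diagram chase: one checks that this update commutes with $\rho$, and that its effect on the bottom row is precisely the `musical chairs' cascade of the PushTASEP triggered at site $j$, so that $\rho$ really is a lumping and the rate it induces on each PushTASEP transition is exactly $1/\beta_j$. This is the analogue of the projection lemma of~\cite{ferrari-martin-2006}, now carried out keeping track of columns.

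For (ii) I would argue by time reversal, as indicated in the abstract. Guess the time-reversed chain $\widehat X^{\ast}$: it should be the same multiline dynamics run with the ring traversed in the opposite sense (bells still at rate $1/\beta_j$, pushes now counter-clockwise). One then checks that $\widehat X^{\ast}$ is again a genuine rate matrix on $\hOmega_\lambda$ with the same total exit rate from each state as the forward chain, and that the reversal identity $\widehat\pi(M)\,r(M,M')=\widehat\pi(M')\,r^{\ast}(M',M)$ holds for every pair $M,M'$. Summing this over $M'$ and using equality of total exit rates yields stationarity of $\widehat\pi$ for the forward chain (and shows a posteriori that $\widehat X^{\ast}$ is its time reversal). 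The content of the reversal identity is bookkeeping: a forward bell at column $j$ turns $M$ into some $M'$ by relocating a few balls, and one must verify that $\wt(M)/\wt(M')$ equals the ratio of the reversed rate out of $M'$ along the reverse step to the forward rate out of $M$, i.e.\ that the $\beta$-factors carried by the balls that move are transported exactly as the site rates $1/\beta_j$ demand.

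The step I expect to be the main obstacle is (ii): guessing the correct reversed dynamics and verifying the reversal identity in the presence of unequal site rates. When $\beta_j\equiv 1$ the weight $\wt$ is uniform and the identity reduces to the balance of total in- and out-rates under the uniform measure, essentially the content of~\cite{ferrari-martin-2006}; with site-dependent rates one must in addition show that the $\beta$-weights attached to the balls are transported correctly by a push, and the delicate cases are pushes that wrap around the ring and cascades that descend through several rows in a single update. A secondary difficulty is to make the multiline dynamics and the reading map precise enough that (i) is a genuine lumping: the pairing must be unambiguous at $t=0$, so that $\rho$ is a function and the update deterministic, and the induced map on configurations must coincide, for each bell $j$ separately, with the PushTASEP transition at site $j$.
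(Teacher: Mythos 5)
Your overall architecture coincides with the paper's: lift the process to a Markov chain on multiline diagrams weighted by the monomials $\prod_i\beta_i^{v_i(M)}$ (one factor $\beta_c$ per ball in column $c$), prove stationarity of this weight by exhibiting a candidate reversed chain satisfying $\hat\pi(M)\,r^{\ast}(M,M')=\hat\pi(M')\,r(M',M)$ together with equality of total exit rates, prove that the bottom-row reading map is a lumping onto the multispecies PushTASEP, and finally invoke Corteel--Mandelshtam--Williams to convert the sums of weights into ASEP polynomials; the row-by-row factorization of $Z_\lambda$ into elementary symmetric polynomials is also exactly as in the paper.

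However, your forward multiline dynamics is oriented the wrong way, and this breaks step (i). You have the bell at column $j$ act first on the \emph{top} row and propagate \emph{downward}. In the paper the bell at column $j$ (rate $1/\beta_j$) performs the single-species PushTASEP jump at column $j$ in the \emph{bottom} row, and the cascade propagates \emph{upward}: the bell in row $r+1$ rings in the column where the row-$r$ particle landed. This orientation is forced by the lumping requirement: the PushTASEP transition triggered at site $j$ must, in the coarsest single-species projection (label $\geq 1$ versus vacancy), move the particle occupying site $j$ of the bottom row. With your top-down rule, a bell at column $j$ of a two-line diagram first moves the row-$2$ particle at $j$ to some column $j'\neq j$ and then perturbs row $1$ at $j'$, so the bottom-row particle at column $j$ never moves and the projected process is not the PushTASEP with rate $1/\beta_j$ acting at site $j$. (Note also that the propagation rule uses only the destination column of each row's jump; the pairing enters only in the reading map, not in the dynamics.) Correspondingly, your guess for the reversed chain is off: the time reversal of the bottom-up, clockwise dynamics must invert \emph{both} orientations --- bells attached to the top row, jumps counter-clockwise, cascade descending --- and a forward transition started at column $i$ and ending at top-row column $j$ reverses to a transition of rate $1/\beta_j$, which is precisely what makes $\hat\pi(M)/\beta_i=\hat\pi(M')/\beta_j$ the correct detailed relation. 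Reversing only the horizontal direction while keeping the vertical one does not yield the time reversal, and the bookkeeping identity you would then have to check fails. Once both orientations are corrected, the rest of your outline matches the paper's proof.
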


We discuss the ASEP polynomials further in \cref{sec:multiline}, although we don't give a full definition. 
For a definition in full generality, see for example 
\cite{chen-degier-wheeler-2020, corteel-mandelshtam-williams-2022, 
ayyer-martin-williams-2024} for various approaches. 
If the reader prefers, \cref{thm:multilrep-statprob}
can instead be understood without any reference to ASEP 
polynomials, as a statement that the stationary probabilities
are proportional to sums of weights of multiline diagrams,
as defined in \cref{sec:multiline}.
See \cref{thm:stat prob sum} for a precise 
statement of the result in this form. 

Note that in the homogenous case $\var_1=\dots=\var_n$,
the stationary distribution of the system is the same
as for the multispecies TASEP, (as was already observed, 
more specifically for the system on the whole line, in 
\cite{ferrari-martin-2006}).

\cref{thm:stat dist multiline} is a special 
case of the result proved in the more general case 
of the $t$-PushTASEP for $t\geq 0$ in \cite{ayyer-martin-williams-2024}, via
relationships between ASEP polynomials and 
non-symmetric Macdonald polynomials; however the more probabilistic approach explained here via 
the multiline process and time-reversal also seems interesting in 
its own right. 

Our second main result is about the invariance of the process under permutations of the parameters $(\var_1, \dots, \var_n)$. 

\begin{thm}
\label{thm:multipath}
Consider the multispecies PushTASEP on the ring with content $\lambda$,
started either in the stationary distribution,
or in any starting configuration with 
$\eta \in \Omega_\lambda$ in which 
$\eta_{k+1} \geq \eta_{k+2}\geq\dots \eta_n$. 

The distribution of the path of the process
observed on sites $1,2,\dots, k$ is 
invariant under permutations of the parameters
$\var_{k+1}, \dots, \var_n$. 
\end{thm}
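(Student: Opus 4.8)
The plan is to strip the statement down to an interchangeability result for a single pair of adjacent sites, which we then prove by a coupling argument modelled on the interchangeability of exponential servers in tandem \cite{Weber1979,TsoucasWalrand1987,MartinPrabhakar2010}. The first reduction handles the stationary case: suppose the conclusion holds for every deterministic start $\eta$ with $\eta_k\ge\eta_{k+1}\ge\dots\ge\eta_L$. Starting both the $\beta$-process and the process with $\beta_{k+1},\dots,\beta_L$ permuted from the configuration $\lambda$ (which is weakly decreasing, hence sorted on all of $\{1,\dots,L\}$), the law of the path on sites $1,\dots,k$ over any window $[T,T+S]$ is the same for both, being a measurable function of the (by hypothesis invariant) path on sites $1,\dots,k$ over $[0,\infty)$. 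Since the finite irreducible chain on $\Omega_\lambda$ is ergodic, letting $T\to\infty$ identifies these window-laws with the corresponding laws under the stationary processes (which start from the respective stationary distributions), giving the stationary case. So it suffices to treat sorted-from-$k$ deterministic starts.

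The second reduction is to an adjacent transposition. The symmetric group on $\{\beta_{k+1},\dots,\beta_L\}$ is generated by the swaps $\beta_j\leftrightarrow\beta_{j+1}$ with $k+1\le j\le L-1$; for each such $j$ we have $j-1\ge k$, so the path on sites $1,\dots,k$ is a deterministic function of the path on sites $1,\dots,j-1$, and $\eta_j\ge\eta_{j+1}$ holds under the hypothesis. Writing $m=j-1$, the whole theorem therefore follows by composing along a reduced word from the following lemma: \emph{if $\eta_{m+1}\ge\eta_{m+2}$, then the law of the path observed on sites $1,\dots,m$ is unchanged when $\beta_{m+1}$ and $\beta_{m+2}$ are exchanged.}

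To prove the lemma I would regard sites $m+1$ and $m+2$ as a two-station block sitting inside the ring: it receives an input stream of particles (those pushed out of site $m$, each carrying its species) and emits an output stream (those pushed into site $m+3$). The heart of the matter is that the joint law of the block's output stream and of everything outside the block is invariant under exchanging the two station rates; this is the multispecies PushTASEP analogue of the interchangeability of two $\cdot/M/1$ queues in tandem, and via the basic coupling of \cref{prop:colouring} one should be able to obtain it level by level from the single-species statement, together with a compatibility check that the single-species couplings for the different thresholds can be realised simultaneously. Granting this, I would couple the two ring processes by using identical bell processes at all sites other than $m+1,m+2$, and the block interchangeability coupling for the bells at $m+1,m+2$, and then prove by induction over the (almost surely discrete) sequence of events that at every time: the configuration on sites $1,\dots,m$ agrees in the two systems; the configuration on sites $m+3,\dots,L$ agrees; and the block holds the same multiset of species in both. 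The first of these, valid for all time, is the assertion of the lemma.

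The main obstacle is the block interchangeability itself in its multispecies, pathwise form. The internal arrangements of the two stations will differ between the two coupled systems, and will not remain ``sorted'' after time zero, so the coupling must be constructed so that each emitted particle agrees in \emph{species} and not merely in time, and so that an input particle arriving while the block is in mismatched internal states still triggers matching outputs in the two systems; this is precisely where the memorylessness of the bells and the combinatorics of the multispecies push must be combined, adapting the arguments of \cite{MartinPrabhakar2010,TsoucasWalrand1987}. A secondary but real difficulty, absent in the tandem-queue setting, is that the ring closes the loop from sites $m+3,\dots,L$ back to sites $1,\dots,m$: the block's input stream is itself only ``coupled'' as part of the induction, so one must check that the coupling propagates consistently all the way around the cycle rather than merely downstream.
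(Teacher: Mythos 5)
Your reductions (the stationary start to a sorted deterministic start via convergence to equilibrium, and an arbitrary permutation of $\beta_{k+1},\dots,\beta_L$ to a single adjacent swap at a pair $(j,j+1)$ with $\eta_j\ge\eta_{j+1}$) match the paper's, and your plan for the adjacent swap --- couple the bells at the two sites, keep all other bells identical, and lift from single species to multispecies through the basic coupling of \cref{prop:colouring} --- is exactly the route taken. But the proposal has a genuine gap precisely where you place ``the main obstacle'': the block interchangeability is asserted, not proved, and it is the whole content of the argument. The missing ingredient is \cref{thm:ABinterchange}: an explicit coupling of $(A,B)\sim \mathrm{PP}(1/\beta_j)\otimes\mathrm{PP}(1/\beta_{j+1})$ with $(\tA,\tB)$ having the rates swapped, under which the superposition $A+B=\tA+\tB$ is unchanged and the positions of the consecutive motif $ab$ in the sequence of marks are identical (achieved by replacing each maximal string $b^{n_1}a^{n_2}$ between successive $ab$ motifs by $b^{n_2}a^{n_1}$; this is legitimate because the law of the set of $ab$-positions is symmetric in the two rates). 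The key point you are missing is the right invariant: one does not need the coupled systems to emit particles of matching species by construction, nor their internal arrangements to agree --- only that the \emph{number} of particles in the two-site block agrees at all times, since then the cumulative output count equals the cumulative input count plus the initial count minus the current count, which forces the output point processes to coincide exactly. The count is matched because, given the input flux, the block (started from $(0,0)$, $(1,0)$ or $(1,1)$) is empty at a given time if and only if the motif $ab$ has occurred since the last input point, and these occurrences are preserved by the coupling; this is also exactly where the exclusion of the initial state $(0,1)$ enters (\cref{rem:eta=01}).

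Your two residual worries then dissolve rather than requiring separate treatment. Since the coupling is of the bell processes alone and works simultaneously for every input flux process and every admissible initial block state, it applies at once to all the threshold projections $\phi^r$ of the multispecies system, which are driven by the same bells; the species of a particle crossing the bond to the right of the block is determined by which threshold fluxes have a point at that instant, so agreement of all the single-species fluxes already forces agreement of species, and the ``compatibility check'' you ask for is vacuous. As for the ring closing the loop: the only way the block's output can feed back into its own input instantaneously is a push that wraps the entire ring, which requires the block to be in state $(0,1)$ with every other site occupied and a bell at $j+1$, i.e.\ the completion of an $ab$ motif; the coupling places these completions at identical times in both systems, so the one-bell-at-a-time induction around the cycle survives. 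To complete the proof you must actually construct the coupling and prove the count-matching lemma; as written, the proposal correctly identifies the statement that needs to be proved but does not prove it.
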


An analogous property was recently proved for the multispecies TAZ\-RP~\cite{ayyer-mandelshtam-martin-2022} using results about interchangeability of exponential-server queues in series. To 
prove \cref{thm:multipath}, we need to develop 
similar interchangeability results in a new context where
the exponential-server queues are replaced by PushTASEP stations.

A related symmetry property for events depending just on the 
configuration at a single time in stationarity is obtained
in the general case $t\geq 0$ in 
\cite{ayyer-martin-williams-2024}. However the result
of 
\cref{thm:multipath} is much stronger, 
both since it applies to systems out of stationarity,
and since it applies to events which depend on
the path of the process evolving over time.
The question of whether this stronger symmetry
property can also be extended to $t>0$ seems very interesting.

Finally, 
we look at various important observables 
for the PushTASEP stationary distribution. We will give formulas for the density and current of particles in \cref{sec:obs},
which can be derived from the corresponding results
for a single species system.
Unlike in the homogeneous case, even
the density of a given species at a given site is
not obvious.
A more intricate result is a formula for the nearest-neighbour
two-point correlation, which is our third main result. This generalizes results of Ayyer--Linusson for the multispecies TASEP~\cite{ayyer-linusson-2016} and Amir--Angel--Valko for the TASEP speed process~\cite{amir-angel-valko-2011}.

For the purposes of this discussion, we will restrict ourselves to $\lambda = (n-1,\dots,1,0)$ with $n-1$ species of particles
(but all other cases can be derived from this case
by projection). 

To state the result, we recall that 
the {\em Schur polynomial} $s_\lambda(x_1, \dots, x_n)$ are an important family of symmetric polynomials playing a significant role in representation theory and algebraic geometry~\cite{stanley-ec2}. 
The Schur polynomial $s_\lambda$ can be defined as a ratio of determinants,
\begin{equation}
\label{def-schur}
s_\lambda(x_1, \dots, x_n) = \frac{\det (x_i^{\lambda_j + n - j} )}{\det (x_i^{n - j} )}.
\end{equation}
which is symmetric in $x_1,\dots, x_n$. 
For $1 \leq i < j \leq n$, define the polynomials
\begin{multline*}
f_{j,i}(\var_1,\dots,\var_n) = \\
\det \begin{pmatrix}
1 & 
s_{\langle 2^{n-j-2} \rangle}(\var_3,\dots,\var_n) & 
s_{\langle 2^{n-i-2} \rangle}(\var_3,\dots,\var_n) \\
- \var_1 - \var_2 & 
s_{\langle 1^1, 2^{n-j-2} \rangle}(\var_3,\dots,\var_n) & 
s_{\langle 1^1, 2^{n-i-2} \rangle}(\var_3,\dots,\var_n) \\
\var_1 \var_2 & 
s_{\langle 2^{n-j-1} \rangle}(\var_3,\dots,\var_n) & 
s_{\langle 2^{n-i-1} \rangle}(\var_3,\dots,\var_n)
\end{pmatrix},
\end{multline*}
and
\begin{multline*}
g_{j,i}(\var_1,\dots,\var_n) = \sum_{a,b=0}^2 
\var_1^{3-a} \var_2^{3-b}
s_{(2^{n-i-2}, a)}(\var_3,\dots,\var_n) \\
\times
s_{(2^{n-j-2}, b)}(\var_3,\dots,\var_n),
\end{multline*}
where $s_{\langle 1^{m_1}, 2^{m_2} \rangle} = 0$ if $m_2$ is negative.

Let $\eta^{(i)}_k$ denote the occupation variable for the particle of species $i$ at site $k$, i.e., $\eta^{(i)}_k = 1$ (resp. $\eta^{(i)}_k=0$) when the $k$th site is occupied (resp. not occupied) by 
a particle of species $i$ 
(where ``particle of species $0$'' means a vacancy).

\begin{thm}
\label{thm:2pt}
Let $0 \leq i,j \leq n-1$. Then the joint probability of seeing the particle of species $j$ at site $1$ and that of species $i$ at 
site $2$ is given by
\[
\aver{\eta^{(j)}_1 \eta^{(i)}_2} = 
\begin{cases}
\ds \var_1 \var_2^2 \frac{f_{j,i}(\var_1,\dots,\var_n)}
{e_{(n-j, n-j-1, n-i, n-i-1)}(\var_1,\dots,\var_n)} &
j < i, \\
\noalign{\vskip9pt}
0 & j = i, \\
\noalign{\vskip9pt}
\ds \frac{g_{i+1,i}(\var_1,\dots,\var_n)} 
{e_{(n-i, n-i-1, n-i-1, n-i-2)}(\var_1,\dots,\var_n) } & \\
\noalign{\vskip9pt}
\hspace*{1cm} \ds + \var_1 \var_2 \frac{s_{\langle 2^{n-j-1} \rangle}(\var_3,\dots,\var_n)}{e_{(n-j,n-j)}(\var_1,\dots,\var_n)} & j = i+1, \\
\noalign{\vskip9pt}
\ds \frac{g_{i,j}(\var_1,\dots,\var_n)}
{e_{(n-i, n-i-1, n-j, n-j-1)}(\var_1,\dots,\var_n) } &
j > i+1.
\end{cases}
\]
\end{thm}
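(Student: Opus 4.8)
The plan is to reduce the multispecies two-point function to a signed combination of single-species (``threshold'') two-point functions, to evaluate each of these by summing inhomogeneously weighted multiline queues, and to recognize the resulting lattice-path sums as Schur polynomials. This is an inhomogeneous refinement of the multiline-queue computation of Ayyer--Linusson~\cite{ayyer-linusson-2016} for the homogeneous multispecies TASEP, the binomial coefficients occurring there being replaced by elementary and Schur symmetric functions of the $\beta_j$.

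First I would invoke the colouring of \cref{prop:colouring}. Writing $\xi^{(r)}_k=\id{\eta_k\ge r}$ for the occupation variable at site $k$ of the single-species PushTASEP obtained by treating species $\ge r$ as particles (so $\xi^{(0)}\equiv 1$ and $\xi^{(L)}\equiv 0$), we have $\eta^{(j)}_k=\xi^{(j)}_k-\xi^{(j+1)}_k$. Expanding the product, $\aver{\eta^{(j)}_1\eta^{(i)}_2}$ becomes a signed sum of four quantities $\aver{\xi^{(r)}_1\xi^{(r')}_2}$ with $(r,r')\in\{j,j+1\}\times\{i,i+1\}$, and each of these, applying the colouring once more, equals a joint probability $\mathbb P(\eta_1\in A,\ \eta_2\in B)$ in a coarse-grained PushTASEP with at most three distinct letters. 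When $i=j$ the answer vanishes, since each species occurs exactly once in $\lambda=(L-1,\dots,1,0)$, so two sites cannot both carry species $j$; the content of the theorem is the remaining cases. The case $j=i+1$ is special precisely because there two of the four threshold pairs collapse onto a single-species two-point function $\aver{\xi^{(j)}_1\xi^{(j)}_2}$, and this is the origin of the extra term $\beta_1\beta_2\,s_{\langle 2^{L-j-1}\rangle}(\beta_3,\dots,\beta_L)/e_{(L-j,L-j)}$.

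Next I would evaluate the coarse-grained joint probabilities using the multiline-queue form of the stationary distribution at $q=1$, $t=0$ (\cref{thm:stat prob sum}), in which $\pi(\eta)$ is proportional to a sum over the multiline queues projecting to $\eta$ of an explicit monomial in $\beta_1,\dots,\beta_L$. For a three-letter system the queues have two rows, and to compute $\mathbb P(\eta_1\in A,\ \eta_2\in B)$ one sums this monomial over the queues whose bully-path projection displays the prescribed letters at columns $1$ and $2$. This sum factors as a sum over the ``local data'' at those two columns -- which rows are occupied there, and how the bully paths enter and leave the two-column window -- times a sum over the ball placements in columns $3,\dots,L$ subject to the count and non-crossing constraints inherited from the matching; the latter evaluates, through the dual Jacobi--Trudi identity $s_\mu=\det(e_{\mu'_a-a+b})$, to a Schur polynomial $s_\mu(\beta_3,\dots,\beta_L)$ whose shape $\mu$ is a rectangle $\langle 2^m\rangle$, a near-rectangle $\langle 1^1,2^m\rangle$, or $(2^m,a)$ with $a\le 2$. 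Summing over the finitely many local configurations assembles into either a $3\times 3$ determinant -- whose three rows record whether $0$, $1$ or $2$ of the sites $\{1,2\}$ are matched in the bottom row and whose remaining column is $((-1)^ke_k(\beta_1,\beta_2))_{k=0,1,2}$ -- recognized as $f_{j,i}$, or into a double sum over heights $(a,b)\in\{0,1,2\}^2$ at columns $1$ and $2$, recognized as $g_{j,i}$. Reassembling the four threshold pairs with signs, clearing to the common denominator $e_{(L-j,L-j-1,L-i,L-i-1)}(\beta_1,\dots,\beta_L)$ and its variants (a pair of $e_N$-factors per site, one for each of its two thresholds), and splitting the cases $j<i$, $j=i+1$, $j>i+1$ gives the stated formula. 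The single-species inputs needed along the way -- the density $\aver{\xi^{(r)}_1}$ and the single-species two-point function $\aver{\xi^{(r)}_1\xi^{(r)}_2}$ -- are $s=1$ quantities, established in \cref{sec:obs}; the degenerate ranges of $i,j$ (e.g.\ $i=0$, $j=L-1$, where some $\xi^{(r)}$ is constant and the convention $s_{\langle 1^{m_1},2^{m_2}\rangle}=0$ for $m_2<0$ is used) I would check directly at the end.

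The main obstacle is the combinatorial analysis of the Ferrari--Martin bully-path matching restricted to the first two columns of the ring, since the matching is non-local: a bottom-row ball in column $1$ or $2$ may be paired with a partner many columns away, or only after wrapping around. I would control this through the standard cycle-lemma encoding of the matching -- reading the ring cyclically, the set of matched bottom-row balls is determined by a ballot-type height function, conditioning at columns $1$ and $2$ imposes boundary data on it, and the number of completions of columns $3,\dots,L$ with a prescribed net height change is a classical lattice-path count, that is, one of the Schur polynomials above. Throughout, \cref{thm:multipath} with $k=2$ provides a check: it guarantees a priori that $\aver{\eta^{(j)}_1\eta^{(i)}_2}$ is symmetric in $\beta_3,\dots,\beta_L$, which both explains the Schur form and flags any sign or indexing slip.
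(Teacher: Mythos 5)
Your proposal is correct and follows essentially the same route as the paper: colouring (\cref{prop:colouring}) plus inclusion--exclusion reduces $\aver{\eta^{(j)}_1\eta^{(i)}_2}$ to joint probabilities in a three-letter system, which the paper evaluates (as $T_1$, $T_2$ in \cref{prop:2pt}) by summing weights of two-row multiline diagrams with prescribed first two columns, recognizing the sums as two-column Schur polynomials and finishing with the dual Jacobi--Trudi identity, including the same special treatment of $j=i+1$. Your cycle-lemma/lattice-path framing of the bully-path restriction and the symmetry check via \cref{thm:multipath} are additional but compatible detail, not a different method.
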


The proof of \cref{thm:2pt} involves projecting
from the multispecies system to a suitable
$3$-species system, for which the probabilities
of relevant events can be analysed using $2$-line diagrams. 

Note that in all cases the two-point correlation function obtained in 
\cref{thm:2pt} is symmetric in $\var_3,\dots, \var_n$,
as must be the case in the light of \cref{thm:multipath}. 
It is interesting to observe the appearance of the 
Schur polynomials in these expressions. 
In the homogeneous case, under suitable rescaling in the limit $n\to\infty$, one obtains interesting asymptotics for the 
joint distribution of the species observed at the two sites as shown in \cite{amir-angel-valko-2011};
setting $(x,y)=(j/n, i/n)$, one obtains a limit with constant density 
in the region $x>y$, a singular term involving non-vanishing probability on the boundary
$x=y$, and repulsion from the boundary in the region $x<y$. 
It would be interesting to explore asymptotics in 
regimes involving inhomogeneous parameters $\var_1,\dots, \var_n$.

Note that the result of  
\cite{ayyer-linusson-2016} has recently been extended in a 
different direction by Pahuja~\cite{pahuja-2023}, who obtains
nearest-neighbour two-point correlations for the multispecies ASEP 
(i.e.\ in our notation the case $t>0$ with all $\var_j$ equal),
based on the multiline diagram construction for the ASEP 
in \cite{martin-2020}.

The plan of the rest of the paper is as follows. 
Throughout, we consider the case of a multispecies
PushTASEP whose contents are fixed and given by some 
partition $\lambda$.
In \cref{sec:single}, we will derive the stationary distribution for the single species inhomogeneous PushTASEP as well as formulas for the density and current; we also explain the interpretation of the
multispecies system as a basic coupling of single species systems.
In \cref{sec:single symm}, we give a proof of the interchangeability of rates for the single species PushTASEP and use that to prove \cref{thm:multipath}. In \cref{sec:multiline}, we
define multiline diagrams and 
construct a multiline process 
which will lump to the multispecies PushTASEP, in order to 
obtain the stationary distribution in \cref{thm:multilrep-statprob}. Finally, in \cref{sec:obs}, we will prove formulas for the current and the density as well as the formula for the two-point correlations in \cref{thm:2pt}.

\section{Single species PushTASEP}
\label{sec:single}

In this section, we focus on the PushTASEP with a single species of particle, i.e. $s = 1$. As before, we take $n$ sites. Therefore, $\lambda = \langle 0^{m_0}, 1^{m_1} \rangle$ with $m_1 < n$ particles (and so $m_0 = n - m_1$ vacancies) and periodic boundary conditions.

Recall that  $\Omega_{\lambda}$ is the configuration space. 
For example, consider the case $m_0 = m_1 = 2$. With the lexicographic ordering of the configurations, i.e.,
\[
\Omega_{(1, 1, 0, 0)} = \{0011, 0101, 0110, 1001, 1010, 1100\},
\]
the transition graph of the chain is given in \cref{fig:eg0011}.

\begin{center}
\begin{figure}
\includegraphics[scale=0.5]{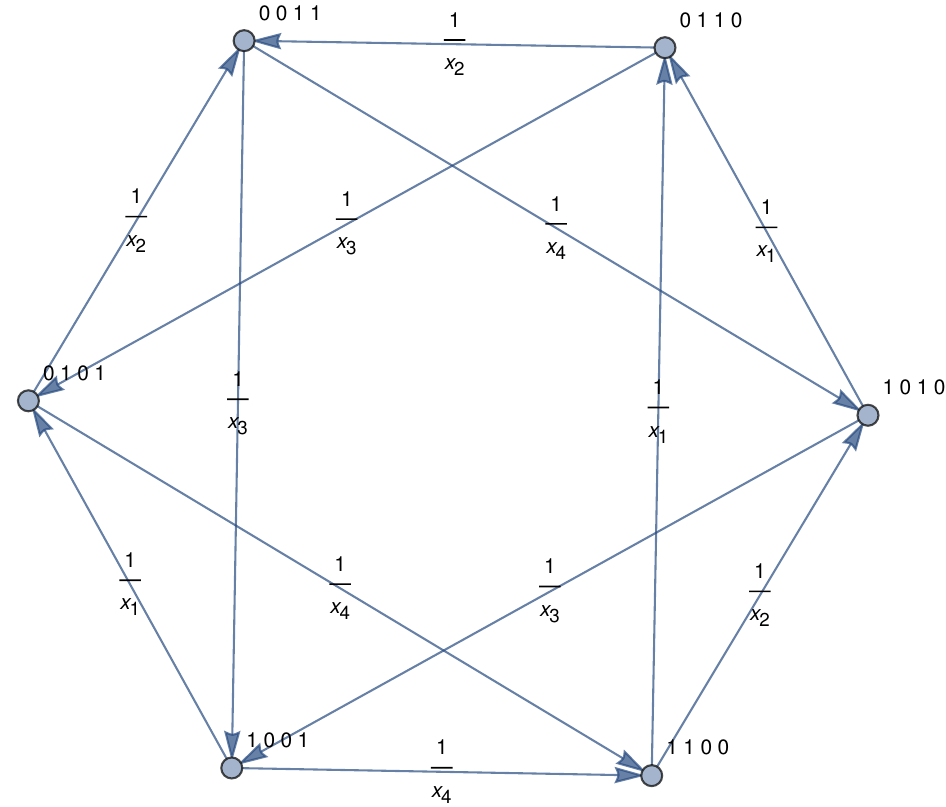}
\caption{The transition graph of the single species PushTASEP with $m_0 = m_1 = 2$.}
\label{fig:eg0011}
\end{figure}
\end{center}

We are interested in the stationary distribution of this process. 
It is easy to see that the single species PushTASEP is irreducible. Therefore, the stationary distribution is unique.
The the following proposition is straightforward to prove using the master equation. 

\begin{prop}
\label{prop:ss-singlePushTASEP}
The stationary probability $\pi$ of $\eta \in \Omega_{\langle 0^{m_0}, 1^{m_1} \rangle}$ for the PushTASEP is 
\[
\pi(\eta) = \frac{1}{e_{m_1}(\var_1,\dots,\var_{n})} \prod_{\substack{i=1 \\ \eta_i = 1} }^{n} \var_i.
\]
\end{prop}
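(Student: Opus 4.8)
The plan is to verify that the claimed formula
\[
\pi(\eta) = \frac{1}{e_{n_1}(\beta_1,\dots,\beta_L)} \prod_{\substack{i=1\\ \eta_i=1}}^L \beta_i
\]
solves the stationarity equations $\sum_{\eta'} \pi(\eta') R(\eta'\to\eta) = \pi(\eta)\sum_{\eta'} R(\eta\to\eta')$ for each $\eta\in\Omega_{\langle 0^{n_0},1^{n_1}\rangle}$. Since the chain is irreducible with a unique stationary distribution, and since the proposed $\pi$ is manifestly a probability measure (the numerator summed over all configurations with $n_1$ particles is exactly $e_{n_1}(\beta_1,\dots,\beta_L)$ by definition of the elementary symmetric polynomial), it suffices to check the balance equations up to the normalization constant; that is, I would work with the unnormalized weight $w(\eta) = \prod_{i:\eta_i=1}\beta_i$ and show $\sum_{\eta'} w(\eta') R(\eta'\to\eta) = w(\eta)\sum_{\eta'} R(\eta\to\eta')$.

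First I would describe the transitions explicitly. Fix a target configuration $\eta$. The transitions \emph{out} of $\eta$: for each occupied site $j$ (with $\eta_j=1$), the bell at $j$ rings at rate $1/\beta_j$ and moves that particle to the nearest vacant site clockwise; there are $n_1$ such transitions, so $\sum_{\eta'} R(\eta\to\eta') = \sum_{j:\eta_j=1} 1/\beta_j$. The transitions \emph{into} $\eta$: a configuration $\eta'$ leads to $\eta$ in one step precisely when there is a site $j$ with $\eta'_j=1$, $\eta_j=0$, and a maximal clockwise run of occupied sites in $\eta'$ starting right after $j$, ending at the site where the pushed particle lands — concretely, $\eta'$ is obtained from $\eta$ by taking some occupied site $k$ of $\eta$, together with the maximal block of occupied sites of $\eta$ ending at $k$ (reading clockwise), and sliding that whole block one step counterclockwise so that its tail now sits at $j$, the vacancy immediately counterclockwise of the block. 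The rate of this transition $\eta'\to\eta$ is $1/\beta_j$, where $j$ is the vacant-in-$\eta$ site at the counterclockwise end of the block. Crucially, $w(\eta')/w(\eta) = \beta_j/\beta_k$: the predecessor configuration $\eta'$ has a particle at $j$ instead of at $k$ relative to $\eta$, and all other occupied sites coincide.

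With this bookkeeping in hand, the balance equation at $\eta$, divided by $w(\eta)$, becomes: for each maximal occupied block of $\eta$ (say with clockwise-last site $k$ and with $j$ the vacancy immediately before the block), the incoming term contributes $(\beta_j/\beta_k)\cdot(1/\beta_j) = 1/\beta_k$; summing over all maximal blocks gives exactly $\sum_{k} 1/\beta_k$ where $k$ ranges over the clockwise-last site of each maximal block. But I claim this equals $\sum_{j:\eta_j=1} 1/\beta_j$: indeed the incoming transitions should be indexed not just by blocks but by (occupied site $k$, choice of how far the block extends). Re-examining: for a target $\eta$ and \emph{any} occupied site $k$ of $\eta$, let $B$ be the maximal occupied block of $\eta$ containing $k$ as its clockwise-\emph{last} element is too restrictive — rather, the correct statement is that predecessors of $\eta$ are indexed by occupied sites $k$ of $\eta$ such that the site clockwise-adjacent to $k$ is vacant in $\eta$ (the "right end" of each occupied block), OR more carefully one checks that each occupied site of $\eta$ arises exactly once as the landing site $k$. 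The main obstacle, and the step requiring care, is precisely this combinatorial matching: showing that the map sending a predecessor transition to its landing site is a bijection onto the occupied sites of $\eta$ (equivalently, that for each occupied $k$ there is exactly one predecessor $\eta'$ with that landing site), so that $\sum_{\text{predecessors}} (w(\eta')/w(\eta))\,R(\eta'\to\eta) = \sum_{k:\eta_k=1} (\beta_{j(k)}/\beta_k)(1/\beta_{j(k)}) = \sum_{k:\eta_k=1} 1/\beta_k = \sum_{\eta'} R(\eta\to\eta')$. Once this bijection and the telescoping cancellation of the $\beta_{j}$ factors are established, the balance equation holds term-by-term and the proof is complete. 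I would double-check the boundary/wraparound cases (a block wrapping around site $L$ to site $1$, and the degenerate case $n_0 = 1$) separately, though the cyclic symmetry of the setup makes these routine.
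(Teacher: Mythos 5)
Your proposal is correct and takes the same route as the paper, which simply invokes the master equation without giving details: one verifies detailed flow balance for the unnormalized weights $w(\eta)=\prod_{i:\eta_i=1}\beta_i$. The bijection you flag as the delicate step is immediate --- for each occupied site $k$ of $\eta$ the unique predecessor landing at $k$ moves that particle back to the first vacancy $j(k)$ counterclockwise of $k$ (all intermediate sites must be occupied, forcing $j$) --- after which the incoming rate $\sum_k (\beta_{j(k)}/\beta_k)(1/\beta_{j(k)})=\sum_{k:\eta_k=1}1/\beta_k$ matches the outgoing rate exactly as you compute.
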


Recall that the \emph{density} at site $i$ is the probability of finding a particle at site $i$ in the stationary distribution. 
We will use angular brackets $\langle \cdot \rangle$ to denote expectations in the stationary distribution. 
As is usual, we let $\eta_j$ be the indicator variable for a particle at site $j$, so that $\dens{j}$ is the density at site $j$.
Because of the factorized nature of the stationary distribution in \cref{prop:ss-singlePushTASEP}, we immediately obtain the following.

\begin{cor}
\label{cor:density-singlePushTASEP}
The density at site $j$ in the stationary distribution is given by
\[
\dens{j} = \frac{\var_j e_{m_1-1}(\var_1,\dots,\widehat{\var_j},
\dots, \var_{n}) }{e_{m_1}(\var_1,\dots,\var_{n})},
\]
where the hat on an argument denotes its absence.
\end{cor}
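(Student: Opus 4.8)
The plan is to obtain the density as a direct consequence of the product formula for the stationary distribution in Proposition \ref{prop:ss-singlePushTASEP}. First I would write
\[
\dens{j} = \sum_{\substack{\eta \in \Omega_\lambda \\ \eta_j = 1}} \pi(\eta)
= \frac{1}{e_{n_1}(\beta_1,\dots,\beta_L)} \sum_{\substack{\eta \in \Omega_\lambda \\ \eta_j = 1}} \prod_{\substack{i=1 \\ \eta_i=1}}^{L} \beta_i.
\]
The sum ranges over configurations with a particle pinned at site $j$ and the remaining $n_1 - 1$ particles placed among the other $L-1$ sites in all possible ways.

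Next I would factor out $\beta_j$ from each term, since every configuration in the sum has $\eta_j = 1$. The residual sum is then
\[
\sum_{\substack{S \subseteq \{1,\dots,L\}\setminus\{j\} \\ |S| = n_1-1}} \prod_{i \in S} \beta_i,
\]
which is by definition \eqref{def-elemsymfn} exactly the elementary symmetric polynomial $e_{n_1-1}$ evaluated in the $L-1$ variables $\beta_1,\dots,\widehat{\beta_j},\dots,\beta_L$. Substituting back gives the claimed formula. I would remark that this is well-defined and gives a genuine probability, since the partition function $e_{n_1}(\beta_1,\dots,\beta_L) = \sum_j \beta_j e_{n_1-1}(\dots,\widehat{\beta_j},\dots)$ by the standard recursion splitting on whether index $j$ is used, so the densities correctly sum over a suitable choice of sites (more precisely, $\sum_{j=1}^L \dens{j} = n_1$, which follows from the dual recursion $e_{n_1} = \frac{1}{n_1}\sum_j \beta_j \partial e_{n_1}/\partial \beta_j$, or simply from conservation of particle number).

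There is essentially no obstacle here: the only thing to get right is the bookkeeping identifying the pinned sum with $e_{n_1-1}$ in the deleted variable set, and this is immediate from the combinatorial definition of $e_m$. The corollary is genuinely a one-line consequence of the factorized form of $\pi$, which is why the excerpt calls it immediate.
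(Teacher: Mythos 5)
Your proposal is correct and is exactly the argument the paper has in mind: the paper gives no separate proof, stating only that the corollary is immediate from the factorized form of $\pi$ in Proposition \ref{prop:ss-singlePushTASEP}, and your computation (pin a particle at $j$, factor out $\beta_j$, identify the residual sum with $e_{n_1-1}$ in the deleted variables) is that one-line deduction spelled out. The only blemish is in your closing aside, where the identity should read $\sum_j \beta_j e_{n_1-1}(\beta_1,\dots,\widehat{\beta_j},\dots,\beta_L) = n_1\, e_{n_1}(\beta_1,\dots,\beta_L)$ rather than $e_{n_1}$, consistent with $\sum_j \dens{j}=n_1$; this does not affect the proof itself.
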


The \emph{current} of a particle across a given edge (say $(n,1)$) is the number of particles per unit time that cross that edge 
in stationarity. Because of particle conservation, the current is the same for all edges. We will denote the current by $J$.
In terms of the stationary distribution for the PushTASEP, this is given by
\begin{equation}
\label{def-curr}
J = \sum_{j = m_0 + 1}^n \frac 1{\var_j} \aver{ \eta_j \cdots \eta_n }.
\end{equation}

\begin{prop}
\label{prop:curr-singlePushTASEP}
The current in the stationary distribution is given by
\[
J = \frac{e_{m_1-1}(\var_1,\dots, \var_{n}) }{e_{m_1}(\var_1,\dots,\var_{n})}.
\]
\end{prop}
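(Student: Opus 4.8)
The plan is to combine the explicit product form of the stationary distribution from Proposition~\ref{prop:ss-singlePushTASEP} with the current formula~\eqref{def-curr}, and reduce the whole computation to a single identity among elementary symmetric polynomials. First I would evaluate the correlations $\aver{\eta_j\cdots\eta_L}$ that appear in~\eqref{def-curr}: summing the weight of Proposition~\ref{prop:ss-singlePushTASEP} over all configurations with $\eta_j=\eta_{j+1}=\dots=\eta_L=1$ fixes the factor $\beta_j\beta_{j+1}\cdots\beta_L$ and leaves the sum of $\prod_{i<j,\ \eta_i=1}\beta_i$ over all placements of the remaining $n_1-(L-j+1)$ particles among sites $1,\dots,j-1$; by~\eqref{def-elemsymfn} this gives
\[
\aver{\eta_j\cdots\eta_L}=\frac{\beta_j\beta_{j+1}\cdots\beta_L\;e_{n_1-(L-j+1)}(\beta_1,\dots,\beta_{j-1})}{e_{n_1}(\beta_1,\dots,\beta_L)}.
\]
Substituting into~\eqref{def-curr}, the prefactor $1/\beta_j$ cancels the leading factor $\beta_j$, and the claim reduces to proving
\[
\sum_{j=n_0+1}^{L}\Bigl(\prod_{i=j+1}^{L}\beta_i\Bigr)e_{n_1-(L-j+1)}(\beta_1,\dots,\beta_{j-1})=e_{n_1-1}(\beta_1,\dots,\beta_L).
\]

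I would establish this identity by a weight-preserving bijection on monomials. The term indexed by $j$ on the left enumerates exactly the $(n_1-1)$-element subsets $S\subseteq\{1,\dots,L\}$ of the form $S=\{j+1,\dots,L\}\cup S'$ with $S'\subseteq\{1,\dots,j-1\}$, i.e.\ those subsets of size $n_1-1$ whose largest missing index is $j$. Since $|S|=n_1-1<L$, each such subset has a well-defined largest missing index, and the containment $\{j+1,\dots,L\}\subseteq S$ forces $L-j\le n_1-1$, hence $j\ge n_0+1$, so it lands in the summation range. Thus the left side is nothing but the classification of all $(n_1-1)$-subsets of $\{1,\dots,L\}$ according to their largest missing index, which sums to $e_{n_1-1}(\beta_1,\dots,\beta_L)$, and the proposition follows.

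I expect the only real obstacle to be the index bookkeeping: verifying that the nonvanishing range $0\le n_1-(L-j+1)\le j-1$ of the elementary symmetric polynomials matches the summation range $n_0+1\le j\le L$ in~\eqref{def-curr}, and (this is where the hypothesis $n_1<L$ is used) that every $(n_1-1)$-subset really has a largest missing index lying in that range.
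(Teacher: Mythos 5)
Your proposal is correct and follows essentially the same route as the paper: both compute $\aver{\eta_j\cdots\eta_L}$ from the product-form stationary measure, substitute into \eqref{def-curr}, and reduce the claim to the identity $\sum_{j=n_0+1}^{L}\beta_{j+1}\cdots\beta_L\, e_{j-n_0-1}(\beta_1,\dots,\beta_{j-1})=e_{n_1-1}(\beta_1,\dots,\beta_L)$. The only (cosmetic) difference is that the paper derives this identity by iterating the recurrence $e_m(x_1,\dots,x_k)=e_m(x_1,\dots,x_{k-1})+x_k e_{m-1}(x_1,\dots,x_{k-1})$, whereas you prove it directly by classifying $(n_1-1)$-subsets according to their largest missing index --- which is the combinatorial content of the same expansion.
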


\begin{proof}
Using arguments similar to the proof of \cref{cor:density-singlePushTASEP}, it is easy to see that
\[
\aver{ \eta_j \dots \eta_n } = \var_j \cdots \var_n
\frac{e_{j-m_0-1}(\var_1,\dots, \var_{j-1}) }{e_{m_1}(\var_1,\dots,\var_{n})}.
\]
Plugging this into \eqref{def-curr}, we obtain, after setting $k = j-m_0-1$,
\begin{equation}
\label{curr1}
J = \sum_{k = 0}^{m_1-1} \var_{m_0 + 2 + k} \cdots \var_n
\frac{e_{k}(\var_1,\dots, \var_{m_0+k}) }{e_{m_1}(\var_1,\dots,\var_{n})}.
\end{equation}
We now note an elementary recursive formula for the elementary symmetric function $e_m(x_1, \dots, x_k)$ in \eqref{def-elemsymfn}. First, split this into terms according to whether $x_k$ appears or not to obtain
\[
e_m(x_1, \dots, x_k)  = e_m(x_1, \dots, x_{k-1}) + x_k e_{m-1}(x_1, \dots, x_{k-1}).
\]
Now, successively apply this recurrence to each variable starting from $x_{k-1}$ and progressing up to $x_1$, to get
\[
e_m(x_1, \dots, x_k)  = \sum_{i=k+1-m}^{k+1} x_i \cdots x_k \, e_{m-k-1+i}(x_1, \dots, x_{i-2}).
\]
One can check that the right hand side of this equation, after appropriate change of variables can be applied to \eqref{curr1}. This gives the desired result.
\end{proof}

We have the following projection property for 
our multispecies dynamics. If we view all 
particles of species $r,\dots, s$  as ``particles'', 
and all particles of lower species $1,\dots, r-1$ as vacancies, 
then we obtain a single species process with $M_i=m_i+\dots+m_s$
particles and $n-M_i$ vacancies. 

Considering such projects for all $r=1,2,\dots, s$, 
we can see the multispecies process as a coupling of $s$ 
single species processes. This is the \textit{basic coupling}
\cite[Chapter VIII, Section 2]{liggett-1985} (under which 
the bells ring at the same sites at the same types in 
all the coupled single species systems).

More generally, we can project from a ``finer'' multispecies
system to another ``coarser'' one, allowing the merging of two or more adjacent classes into one:

\begin{prop}
\label{prop:colouring}
Let $\phi:\NN\mapsto\NN$ be any function with $\phi(0)=0$ 
which is weakly order-preserving (i.e.\ for all $i<j$, $\phi(i)\leq \phi(j)$).
Then the multispecies PushTASEP with particle content
$\lambda=(\lambda_1,\dots, \lambda_n))$.
lumps to the multispecies PushTASEP
with particle content given by $\phi(\lambda)$, 
where $\phi(\lambda)$ is the partition 
$(\phi(\lambda_1),\dots, \phi(\lambda_n))$.
\end{prop}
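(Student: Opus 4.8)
The plan is to establish something a little stronger than Markov lumpability, via the graphical construction. Realise the $\lambda$-PushTASEP through a family of independent Poisson clocks (``bells''), the clock at site $j$ ringing at rate $1/\beta_j$, each ring triggering the deterministic displacement cascade described above. Since these rates do not depend on the configuration, it suffices to verify that a single bell commutes with applying $\phi$ coordinatewise: if a bell at site $j$ sends the fine configuration $\eta$ to $\eta'$, then a bell at site $j$ sends $\phi(\eta)$ to $\phi(\eta')$ under the coarse rule. Granting this, the process $(\phi(\eta_t))_{t\ge0}$ is driven by exactly the same bells and performs the coarse cascade at every ring, so it is precisely the $\phi(\lambda)$-PushTASEP; in particular its law depends only on $\phi(\eta_0)$, which is the claimed lumping, and in fact the coupling statement is pathwise and remains valid out of equilibrium.

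To verify the commutation I would first pin down the structure of the fine cascade triggered at $j=j_1$ with $\eta_{j_1}=i_1>0$. It visits sites $j_1,j_2,\dots,j_m$ with strictly decreasing old values $i_k:=\eta_{j_k}$, so that $i_1>i_2>\dots>i_m=0$, where $j_{k+1}$ is the first site clockwise of $j_k$ with $\eta$-value below $i_k$; hence every site on the clockwise arc from $j_k$ to $j_{k+1}$ has $\eta$-value at least $i_k$. A short induction (using that, for $\ell<k$, the arc from $j_\ell$ to $j_{\ell+1}$ has $\eta$-values $\ge i_\ell>i_k$) shows that $j_1,\dots,j_m$ actually occur in clockwise order within a single turn of the ring, and the net effect of the ring is $\eta'_{j_1}=0$, $\eta'_{j_k}=i_{k-1}$ for $2\le k\le m$, all other sites unchanged. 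Now apply $\phi$: the coarse values $c_k:=\phi(i_k)$ satisfy $c_1\ge c_2\ge\dots\ge c_m=0$. If $c_1=0$, order-preservation forces every $c_k=0$, so $\phi(\eta')=\phi(\eta)$, consistent with the coarse bell landing on a vacancy and doing nothing. If $c_1>0$, one checks that the coarse cascade from $j_1$ visits exactly the ``record'' subsequence $j_{k_1},\dots,j_{k_p}$ with $k_1=1$ and $k_{r+1}$ the first index at which $c$ drops strictly below $c_{k_r}$: every site strictly between $j_{k_r}$ and $j_{k_{r+1}}$ has coarse value $\ge c_{k_r}$ (those of the form $j_k$ because $c_k=c_{k_r}$ there, the others because they sit on an arc whose $\eta$-values are $\ge i_{k_{r+1}-1}$, hence whose coarse values are $\ge c_{k_{r+1}-1}=c_{k_r}$), so the coarse rule skips all of them, while $c_{k_p}=0$ supplies the terminating coarse vacancy. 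A site-by-site comparison then gives the coarse outcome equal to $\phi(\eta')$: at $j_1$ both are $0$; off the list both equal $\phi(\eta_\ell)$; and at $j_k$ with $2\le k\le m$ the coarse outcome is $c_{k_{r-1}}$ if $k=k_r$ and $c_k$ otherwise, each of which equals $c_{k-1}=\phi(i_{k-1})=\phi(\eta'_{j_k})$ because $c$ is constant on each block $[k_r,k_{r+1}-1]$.

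The routine part is the bookkeeping just described; the points that need real care are the structural facts about the fine cascade -- that its sites appear in clockwise order around the ring and that the intervening arcs carry large enough values -- since these are precisely what make the image cascade collapse so cleanly under $\phi$, together with the mildly degenerate case in which $\phi$ maps the triggering species to $0$. Finally, it is worth noting that taking $\phi=\mathbbm{1}_{\{x\ge r\}}$ recovers the threshold projection onto the single-species PushTASEP with $N_r$ particles discussed above, so running the $s$ thresholds $r=1,\dots,s$ simultaneously re-exhibits the multispecies process as the basic coupling of $s$ single-species PushTASEPs.
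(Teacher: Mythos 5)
Your proposal is correct and follows essentially the same route as the paper, which also argues that a single bell's transition commutes with the coordinatewise recolouring by $\phi$ (the paper dismisses the verification as ``an easy case-analysis''). Your write-up simply carries out that case analysis in full, including the structural facts about the cascade (clockwise ordering, lower bounds on the intervening arcs) and the collapse to the record subsequence, all of which check out.
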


\begin{proof}
This is an elementary consequence of the dynamics. 
We ``recolour'' the particles of the system,
giving any particle previously of species $r$ the
new label $\phi(r)$. 
It is an easy case-analysis to check that for any transition
(caused by a bell at some site $j$), the result is independent of whether the recolouring is performed before or after the transition. 
For example, consider the system with content $\langle 0^2, 1^2, 2^2, 3^1, 4^1 \rangle$ and the second transition in \eqref{multilrep-trans-eg}. 
Applying the map $\phi$ given by $\phi(0)=0$, $\phi(1)=\phi(2)=\phi(3)=1$, $\phi(4)=2$, we
recolour to the system with content $\langle 0^2, 1^5, 2^1)$,and the transition becomes $(1, 0, 1, 2, 1, 0, 1, 1) \to (1, 1, 1, 2, 1, 0, 0, 1)$ in either case. 
\end{proof}

To map to a single species system as described just above, we would apply the map $\phi^r$ with $\phi^r(i)=0$ for $i<r$, and $\phi^r(i)=1$ for $i\geq r$.

\section{Interchangeability of rates}

\label{sec:single symm}
Weber \cite{Weber1979} proved an 
\textit{interchangeability} result 
for exponential queueing servers in tandem. 
Consider two independent $./M/1$ queueing servers
(i.e.\ servers who offer service at the times
of a Poisson process) in tandem, with service rates
$\mu_1$ and $\mu_2$. The first queue 
has some arrival process $A$, with an arbitrary distribution (for example, it could be deterministic), which is independent of the service processes. By ``tandem''
we mean that a customer leaving the first queue immediately joins the second queue; the
departure process from the first server is the arrival process of the second.

Then Weber's result is that the law of the departure
process from the system (i.e.\ of the departure process from 
the second queue) is the same if the rates $\mu_1$ and $\mu_2$
are interchanged. 

Various different proofs of this result were subsequently 
given; a significant one from our point of view was a 
coupling proof by Tsoucas and Walrand \cite{TsoucasWalrand1987}.
They constructed a coupling of two pairs of independent Poisson processes, $(S_1, S_2)$ with rates $\mu_1, \mu_2$ 
and $(\tS_1, \tS_2)$ with rates $\mu _2, \mu_1$,
such that for any arrival process $A$,
the output of the system with arrival process $A$ and service 
processes $(S_1, S_2)$ is the same as that of the system with 
arrival process $A$ and serice processes $(\tS_1, \tS_2)$. 

This stronger result, holding simultaneously for all arrival 
processes, makes it possible to extend to a multispecies
framework (since, along the lines we have already seen above, 
a multispecies configuration can be seen as a coupling
of several single species configurations). See for 
example \cite{MartinPrabhakar2010} for extensive discussion
and applications. 

In this section we develop similar ideas in a new context,
to obtain interchangeability of rates of PushTASEP stations. 
We will ultimately be able to apply them to get the result 
of \cref{thm:multipath}.

We consider the PushTASEP on the ring for this work, but the result below applies equally to any one-dimensional lattice (such as a closed or open interval or all of $\ZZ$).
Each site $j$ has an associated parameter $\var_j$. Bells ring independently as Poisson processes at each site, with rate $1/\var_j$ at site $j$. 
Recall that when a bell rings at site $j$ in the single species PushTASEP:
\begin{itemize}
    \item if $j$ is empty, nothing happens.
    \item if $j$ is occupied, then:
    \begin{itemize}
        \item $j$ becomes empty;
        \item the first empty site $k$ to the right of $j$ becomes occupied;
    \end{itemize}
    which we call a \emph{transfer of a particle from $j$ to $k$}.
\end{itemize}
For each lattice bond $(j,j+1)$, we have a 
``flux process'' across the bond; a point process $F_{j,j+1}$ which records the times when there is a transfer of a particle from $j$ or a site to its left to $j+1$ or a site to its right.

Fix a site $j$. Consider the system started from, say, time $0$. 
If we know the flux process $F_{j-1,j}$ for the bond on the left of $j$, the initial occupancy of the site $j$, and the Poisson process of bells at site $j$, then we can obtain the flux process $F_{j, j+1}$ 
for the bond on the right of $j$.

Iterating, this now works for any finite interval
$j, j+1, \dots, j+k$, where $k\geq 0$. Given the flux process $F_{j-1,j}$ at the left end of the interval, the initial configuration inside the interval, and the bell processes inside the interval, one can obtain the flux process $F_{j+k, j+k+1}$ at the right end. 
For the moment we only need the case $k=1$.

\newcommand{\rate}{\rho}

We write $\mathrm{PP}(\rate)$ to denote a Poisson process with rate $\rate$. This is our interchangeability result.

\begin{thm}
\label{thm:ABinterchange}
Consider two neighbouring sites $j$, $j+1$ with parameters $\var_j, \var_{j+1}$.
Regard the output flux process $R=F_{j+1, j+2}$ on 
the time-interval $[0,\infty)$
as a function $R(L, A, B, \eta_{01})$ of:
\begin{itemize}
    \item the input flux process $L=F_{j-1, j}$ on $[0,\infty)$;
    \item the Poisson processes $A$ and $B$ of bells at sites $j$ and $j+1$ respectively;
    \item the time-$0$ occupancies of sites $j$ and $j+1$, denoted by $\eta_{01}=(\eta_j, \eta_{j+1})\in\{0,1\}^2$.
\end{itemize}

Then there exists a coupling of 
\begin{align*}
    (A,B)&\sim \mathrm{PP} \left(\frac{1}{\var_j}\right) \otimes \mathrm{PP} \left(\frac{1}{\var_{j+1}}\right) \\
\intertext{and}
(\tA, \tB)&\sim \mathrm{PP}\left(\frac{1}{\var_{j+1}}\right)
\otimes \mathrm{PP}\left(\frac{1}{\var_j}\right)
\end{align*}
with the property that for every locally finite $L$, and for 
every $\eta_{01} \in \{(0,0), \allowbreak  (1,1),(1,0)\}$,
\[
R(L, A, B, \eta_{01}) = R(L, \tA, \tB, \eta_{01})
\]
with probability $1$. 
\end{thm}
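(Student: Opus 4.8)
\textbf{Proof proposal.}

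The first step is to reduce the statement to a question about a small deterministic automaton. Fix the two sites and consider the ``window'' $\{j,j+1\}$, whose occupancy takes values in $\{(0,0),(1,0),(0,1),(1,1)\}$. Given the input flux $L$, the bell processes $A,B$ and the initial occupancy $\eta_{01}$, the occupancy of the window evolves deterministically, and the output flux $R=F_{j+1,j+2}$ is a deterministic functional of this trajectory: a point of $R$ occurs exactly at a point of $L$ while the window is full, at a point of $A$ while the window is full, and at a point of $B$ while site $j+1$ is occupied. In particular, a single PushTASEP station behaves like a buffer-one queue with ``pushing'': an arriving particle either occupies the empty station until the next bell, which then ejects it, or, if the station is already occupied, passes through instantaneously and is ejected at once; the pair of sites is a tandem of two such stations, linked by the pushing rule. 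So it suffices to couple $(A,B)$ and $(\tA,\tB)$ so that this two-station tandem produces the same output for every input $L$ and every admissible $\eta_{01}$.

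The plan is to construct the coupling by adapting the method of Tsoucas and Walrand \cite{TsoucasWalrand1987}. The essential feature of their construction, which is exactly what makes a statement ``simultaneously valid for all inputs'' possible, is that the map $(S_1,S_2)\mapsto(\tS_1,\tS_2)$ does \emph{not} reference the arrival process: it is built from a time-reversal bijection applied to the service processes alone, together with the reversibility of the Poisson process (a rate-$\rho$ Poisson process reversed on a finite horizon is again rate-$\rho$). We follow the same scheme. The heart of the matter is the analogous combinatorial identity for our buffer-one pushing stations: that the occupancy of the window $\{j,j+1\}$ over a horizon $[0,T]$ --- and hence $R$ --- is invariant under the joint operation of reversing time on $[0,T]$, reversing the point processes $L$ and $R$, and transposing $A\leftrightarrow B$. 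Granting this, one reverses $A,B$ on the horizon and transposes them to produce $(\tA,\tB)$ from $(A,B)$ alone (with the correct swapped rates), checks that the finite-horizon constructions are compatible as $T\to\infty$, and passes to the limit to obtain a single coupling on $[0,\infty)$ with the stated property for all locally finite $L$ at once.

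It then remains to handle the three admissible initial occupancies. For $\eta_{01}=(0,0)$ the window starts empty and the construction above applies directly. For $\eta_{01}=(1,0)$ the single initial particle sits at site $j$, at the start of its ``transit'': it spends an $\mathrm{Exp}(1/\beta_j)$ time before reaching site $j+1$ and an $\mathrm{Exp}(1/\beta_{j+1})$ time before leaving, and these two phases are interchanged by the transposition, so the reversal identity persists after incorporating this boundary term; the case $\eta_{01}=(1,1)$ is similar, with one extra particle that must leave first. The configuration $(0,1)$ is genuinely, and necessarily, excluded: a particle resting at site $j+1$ is ejected at rate $1/\beta_{j+1}$, which is not symmetric in the two rates, so no $L$-independent coupling can force the first output time to agree; this is precisely the configuration that does not arise for windows inside the sorted initial data $\eta_k\ge\eta_{k+1}\ge\cdots\ge\eta_L$ to which Theorem~\ref{thm:multipath} will be applied.

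The main obstacle is the reversal identity of the second paragraph. For the classical $\cdot/M/1$ server this is the Tsoucas--Walrand reversal lemma (equivalently, a transpose symmetry of a two-row last-passage array), but the PushTASEP station differs because of the pushing rule and the single-buffer constraint, so the correct max-plus representation of the window's occupancy must be identified and its transpose-reversal symmetry verified directly; a secondary technical point is the compatible passage from finite horizons to a single coupling on $[0,\infty)$.
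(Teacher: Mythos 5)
Your reduction to a deterministic automaton on the window $\{j,j+1\}$ is correct (the characterisation of when $R$ has a point, and the reason $(0,1)$ must be excluded, both match the paper), but the core of the argument is missing: the ``reversal identity'' on which everything rests is neither stated precisely nor proved, and it is not clear that it could deliver the theorem even if true. Time reversal of the dynamics on $[0,T]$ exchanges the roles of input and output: it relates the system (input $L$, bells $(A,B)$, output $R$) to a system with input the \emph{reversed output} $\rho_T R$ and bells $(\rho_T B,\rho_T A)$, producing $\rho_T L$. The theorem, however, requires a statement about the system with the \emph{same} input $L$ and bells $(\tA,\tB)$; setting $(\tA,\tB)=(\rho_T B,\rho_T A)$ gives you information about the wrong input process, so an additional symmetry would be needed to close the loop, and you have not identified one. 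There is also a real problem with the passage $T\to\infty$: the map $(A,B)\mapsto(\rho_T B,\rho_T A)$ depends on $T$ and these couplings are not consistent as $T$ grows, so ``pass to the limit'' is not available without a further idea. Finally, your description of the $(1,0)$ case (the initial particle ``transiting'' through two exponential phases) is not accurate as stated, since an arrival at a window in state $(1,0)$ lands at $j+1$ rather than queueing behind the particle at $j$.

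For comparison, the paper's coupling does not use time reversal at all. It writes $(A,B)$ as a single Poisson process $A+B$ of rate $1/\beta_j+1/\beta_{j+1}$ with i.i.d.\ marks $a,b$, observes that the point process of occurrences of the motif $ab$ (an $a$ immediately followed by a $b$) has a law symmetric in the two rates, and couples $(\tA,\tB)$ to have the same superposition and the same $ab$-motif locations (explicitly, each maximal block $b^{n_1}a^{n_2}$ between motifs is replaced by $b^{n_2}a^{n_1}$). The proof then shows by induction over events that the particle count in the window agrees in the two systems at all times, the key point being that for admissible initial states the window is empty at time $u$ if and only if the motif $ab$ has occurred since the last point of $L$ (or since time $0$). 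If you want to salvage your approach, that combinatorial characterisation of emptiness is the lemma you would need to discover and prove; as written, your proposal is a plan with its central step conjectured.
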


See \cref{fig:interchange} for an illustration.
This theorem says that the bell rates $1/\var_j$ and $1/\var_{j+1}$ at the neighbouring sites $j$ and $j+1$ are interchangeable. 

\begin{figure}[ht]
    \centering
\includegraphics[width=0.7\textwidth]{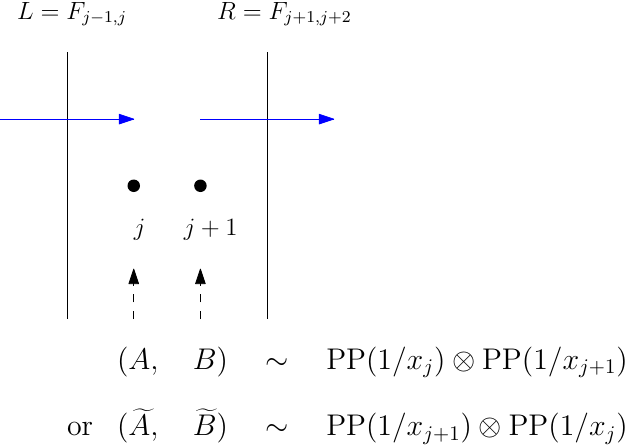}
    \caption{Illustration of the set-up of \cref{thm:ABinterchange}.
    \label{fig:interchange}
    }
\end{figure}

\newcommand{\marked}{w}

Before we begin the proof, we describe the coupling.
First, we will use $\rate = 1/\var_j$ and $\rate'= 1/\var_{j+1}$ in the rest of this section to keep the notation simple.
Recall that a basic fact about Poisson process is the thinning theorem~\cite{durrett-2016}, which in our context reads as follows.
We can describe 
$(A,B)\sim \mathrm{PP}(\rate)\otimes\mathrm{PP}(\rate')$
via a single Poisson process of 
rate $\rate+\rate'$ which is the superposition of $A$ and $B$, with a mark $a$ or $b$ attached to each point, according to whether it comes from the process $A$ or $B$ respectively. 
Given the points of $A+B$, the marks are i.i.d.\ and each is $a$ with probability $\rate/(\rate+\rate')$ and $b$ with probability $\rate'/(\rate+\rate')$. If we index the points of $A+B$ with $\ZZ$ in increasing order, say with point $0$ the last to occur before time $0$ and point $1$ the first to occur after time $0$, then we can identify the sequence of marks with an element
$(\marked_i, i\in \ZZ)\in \{a,b\}^\ZZ$.

Under our coupling, the superposition $\tA+\tB$ will be the same as $A+B$; we will just change (some of) the marks. 
Consider occurrences of the motif 
$ab$, that is, look for $i$ such that $\marked_i=a$, $\marked_{i+1}=b$. 
The set $\cC$ of such $i$ has a distribution which is invariant under interchanging $\rate$ and $\rate'$. 
This is because if $i_1<i_2<\dots<i_n$, 
then $P(i_1,\dots,i_n\in\cC)$
is equal to $0$ if $i_k-i_{k-1}=1$
for some $k$, and otherwise is equal to 
$\rate^n (\rate')^n/(\rate+\rate')^{2n}$.
This means there exists a coupling of $(A,B)$ and $(\tA,\tB)$ with the required distributions such that:
\begin{itemize}
    \item the superpositions $A+B$ and $\tA+\tB$ are the same;
\item
the occurrences of the motif $ab$ are the same in both processes.
\end{itemize}

These are the only properties we will need, but we can make the coupling explicit as follows.
The $ab$ motifs are separated by words of the form $b^* a^*$,
consisting of some number (maybe $0$) of $b$'s followed by some number (maybe $0$) of $a$'s.
Conditional on the length $n$ of the string, the probability that
it consists of $n_1$ $b$'s followed by $n_2$ $a$'s 
(where $0\leq n_1, n_2$, with $n_1+n_2=n$)
is proportional to
\begin{equation}\label{stringprob}
(\rate')^{n_1}\rate^{n_2}.
\end{equation}
An explicit way to realise
the coupling is to 
obtain the process $(\tA,\tB)$
from $(A,B)$ as follows. 
Leave the $ab$ motifs unchanged. As for the $b^{n_b} a^{n_a}$ separating them,
replace it instead by $b^{n_a} a^{n_b}$. 
This is illustrated in \cref{fig:ABcoupling}.
Because of \eqref{stringprob}, this 
has the effect of interchanging the probability of $a$ and $b$ as desired.

\begin{rem}
Equivalently, rather than using this deterministic scheme, one could just resample each of the strings independently according to the new desired measure.
One can also think about all of this in terms of run lengths of consecutive $a$'s and $b$'s (which are independent, geometric ($\rate'/(\rate+\rate')$) for the $a$'s and geometric($\rate/(\rate+\rate')$) for the $b$'s).
\end{rem}

\begin{center}
\begin{figure}[ht]
\includegraphics[width=\textwidth]{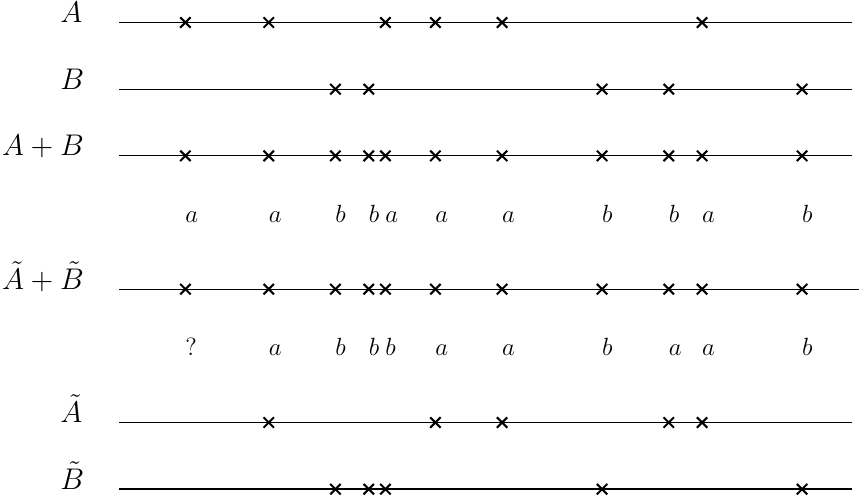}
\caption{
\label{fig:ABcoupling}
Illustration of the coupling scheme. This version shows the deterministic scheme explained above. The point marked with a question mark symbol will either belong to $\tA$ or $\tB$ depending on the points before it.
}
\end{figure}
\end{center}

\begin{proof}[Proof of \cref{thm:ABinterchange}]
We will consider two systems -- call them $S$ and $\tS$. In each one we have two sites $j$ and $j+1$. We have the same input flux process $L=F_{j-1,j}$. We have bell processes at $j$ and $j+1$ respectively given by $A$ and $B$ in system $S$, and $\tA$ and $\tB$ in system $\tS$. 
The claim is that the output flux processes $F_{j+1,j+2}$ are exactly the same (not just the same in distribution) in both systems. 

The initial configuration $\eta$ is the same in both systems, 
with $\eta_{01} = (\eta_j, \eta_{j+1})$ being one of $(0,0)$, $(1,0)$, or $(1,1)$. See \cref{rem:eta=01} after this proof for discussion of why $(\eta_j, \eta_{j+1})$ cannot be $(0,1)$.
Since the flux into $\{j,j+1\}$ on the left is the same in the two systems, and they start with the same number of particles, in order to conclude that the flux out of $\{j,j+1\}$ on the right is the same in the two systems it will be enough to observe that the number of particles in these two sites remains the same across both systems for all times. 

We work by induction to determine whether there could be a first moment when the number of particles in $S$ and $\tS$ do not agree. Let us first check whether an extra particle could be added in one system and not in the other.
This could happen only when a point of $L$ occurs. But a point of $L$ always adds a particle unless the system is occupied in both $j$ and $j+1$. So if a point of $L$ adds a particle in one system but not the other, it can only be because the numbers already didn't agree. 

Hence it would have to be that one system loses a particle, while the other does not. This would have to happen due to a bell ringing at either $j$ or $j+1$. Recall that such bells happen simultaneously in $S$ and $\tS$, 
though not necessarily in the same place (that is, the superpositions $A+B$ and $\tA+\tB$ are the same).
First suppose both systems were previously full. Then any such bell loses a particle from \textit{both} systems. So, the only way for the numbers to become unbalanced would be for both systems to contain one particle, and then for one of them (but not the other) to become empty. We will now show that both systems necessarily become empty at the same time.

This boils down to a case analysis.
First, take the case where at time $0$ these two sites are full (i.e. $(\eta_j, \eta_{j+1}) = (1,1)$), or in the state $(1,0)$. In this case, for the system to be empty at a given time, it is necessary and sufficient that since the last point of $L$ (or since time $0$ if there has been no point of $L$), there has been a bell at $j$ followed by a bell at $j+1$. That is, we need to have observed the motif $ab$ in the marks of the Poisson process. But our coupling ensures that this motif occurs at identical times in the systems $S$ and $\tS$. 
The case where the system starts empty (i.e. $\eta=(0,0)$) is similar. In this case, if there has been no point of $L$, then the system remains empty. If there has been a point of $L$, then as above, for the system to be empty we need to have seen the motif $ab$ since the last such point. 

Hence the number of particles in the two systems $S$ and $\tS$ must remain the same at all times. It follows that the output flux processes $R$ and $\tR$ are the same, as desired. 
\end{proof}

\begin{rem}
\label{rem:eta=01}
The proof above fails, as it must, if instead we start from the configuration $(0,1)$. Then if we observe a bell at $j+1$ in one system, but at $j$ in the other, then the former may have a point of the output flux process, while the latter does not.
\end{rem}

\begin{rem}
In the setting above we can also take the ``input
process'' to be defined on all times in $\RR$, 
and similarly obtain an output process for all times in $\RR$.
(This works straightforwardly since there are arbitrarily early 
times when we know the occupancy state of $j$;
whenever the bell at $j$ rings, it becomes empty.)
Then we may regard a single PushTASEP site as an operator
which maps distributions of input processes to 
distributions of output processes. Recalling
Burke's theorem for exponential-server queues, we
can ask whether all the ergodic fixed points of this map to 
be Poisson processes.
\end{rem}

The fact that our coupling of $(A,B)$ and $(\tA, \tB)$ works
simultaneously over all input processes $L$ means that we can 
pass from a single species result to a multispecies result. 

\begin{proof}[Proof of \cref{thm:multipath}]
First consider a single species PushTASEP on a ring of $n$ sites, with $k$ particles for some $1\leq k\leq n-1$. 

Imagine two versions $S$ and $\tS$ with the same initial configuration of this system, with the rates $1/\var_j$ and $1/\var_{j+1}$ at sites $j$ and $j+1$ swapped between the two. We may couple $S$ and $\tS$ using identical bell processes between the two systems everywhere except $j$ and $j+1$, and using the coupling provided by \cref{thm:ABinterchange} for the bell processes at sites $j$ and $j+1$. 

Suppose we start the two systems in the same configuration at time $0$, with the configuration at sites $(j,j+1)$ being one of $(0,0)$, $(1,0)$ or $(1,1)$. Imagining the two systems evolving one bell at a time, we can see there is never a first moment when the flux processes $F_{j-1,j}$ and $F_{j+1, j+2}$ are different between the two systems. One minor subtlety can arise on the ring if there are $n-1$ particles. A bell at site $j+1$ can ``cause" a push which propagates almost all the way around the ring, to create a point in the flux process $F_{j-1,j}$. For this to happen, both systems must be in state $(0,1)$ on the sites $(j,j+1)$, and both must experience a bell at $j+1$. Afterwards they are both in state $(1,0)$ on those two sites, and the coupling between the two systems proceeds without problem. 

Now we proceed to the \textit{multispecies} PushTASEP on the ring of size $n$. 
We view the multispecies PushTASEP as a coupling of several single species PushTASEPs. 
We take some initial condition on the multispecies system in which the particle at site $j$ is at least as strong as the particle at site $j+1$. This means that in all the single species projections, the configuration at $(j,j+1)$ is in $\{(0,0), (1,0), (1,1)\}$. 
Again we consider a coupling between two systems with rates $(1/\var_j,1/\var_{j+1})$ or $(1/\var_{j+1},1/\var_j)$ at $j$ and $j+1$ respectively, with all the other rates remaining the same. 
As before we use the coupling of the bell processes at sites $j$ and $j+1$ given by \cref{thm:ABinterchange}, and at all other sites, we keep the bell processes identical between the two systems. Since all 
the single species projections look the same between the two systems outside $\{j,j+1\}$, the same
is true of the multispecies system which is the coupling of those single species systems. 

Now as in the statement of \cref{thm:multipath},
we may fix any $k$, and consider an initial condition in which 
$\eta_{k+1}\geq \dots\geq \eta_n$. From the argument above,
we may exchange the parameters of any two neighbouring sites
in $\{k+1,\dots, n\}$ while preserving the distribution of the
process as observed on sites $\{1,2,\dots, k\}$. But then 
we can perform a sequence of such nearest-neighbour transpositions
to realise any desired permutation of the parameters $\var_{k+1}, 
\dots, \var_n$. So indeed the distribution is symmetric
under permutation of these parameters, as desired. 

Finally we wish to show the same property for events 
defined on the time interval $[0,\infty)$ starting from the 
stationary distribution. Consider the system started from any initial 
configuration satisfying $\eta_{k+1}\geq \dots\geq \eta_n$.
Since the system is an irreducible Markov chain on a finite
state-space, it has a unique stationary distribution, to which
it converges from this initial condition. 
In particular, we may arbitrarily closely approximate the probability of any event on $[0,\infty)$ 
starting from stationarity by taking the probability of 
an appropriate event on $[T, \infty)$ for large enough $T$. 
Since the probabilities of all such approximating events 
are symmetric in $\var_{k+1}, \dots, \var_n$, the same 
must also be true of the original event. 
\end{proof}

\section{Multiline PushTASEP}
\label{sec:multiline}
In this section we discuss how the strategy of Ferrari and Martin \cite{ferrari-martin-2006} adapts
to give the result of \cref{thm:multilrep-statprob}. 
Theorem 4 of \cite{ferrari-martin-2006} refers
to a multispecies version of the Hammersley-Aldous-Diaconis process
-- see the comment at the end of Section 2 of that paper 
for the relationship between the HAD and the PushTASEP (or 
``long-range exclusion process'') via reversing the order of the particles. The main novelty in our setting is the introduction
of site-wise inhomogeneity via the parameters $\var_1, \dots, \var_n$. 

Here is the outline of the strategy for studying
the stationary distribution of the multispecies process 
on $\Omega_\lambda$, where the partition $\lambda$ gives
the contents on the system. 
\begin{itemize}
\item
We consider a set $\hOmega_\lambda$ of \textit{multiline diagrams}
and a map $\Pi$ from $\hOmega_\lambda$ to the set $\Omega_\lambda$
of PushTASEP configurations. 
\item
We construct a Markov chain on the set of multiline diagrams (the multiline process) and find its stationary distribution using a time-reversal argument
(\cref{thm:stat dist multiline}).
In the homogeneous case, this stationary distribution was simply the uniform distribution; now that we
introduce site-wise inhomogeneity, we get a distribution 
with weights proportional to monomials in the parameters $\var_1, \dots, \var_n$. 
\item 
We show that the projection of the multiline process under
the map $\Pi$ is the multispecies PushTASEP 
(\cref{lem:projection}). 
From this we deduce the stationary distribution $\pi$ of the
multispecies PushTASEP, with weights proportional to sums 
of monomials in the parameters $\var_1, \dots, \var_n$
(\cref{thm:stat prob sum}).
\end{itemize}
To obtain the form of the stationary distribution given in 
\cref{thm:multilrep-statprob}, we finally apply the
correspondence between ASEP polynomials and weights of multiline
diagrams given by \cite{corteel-mandelshtam-williams-2022}. 

We concentrate particularly on
the argument for \cref{thm:stat dist multiline}, where the inhomgoneity plays a significant role. In contrast, in the argument
for \cref{lem:projection}, the rates are irrelevant, 
and we outline the idea briefly. 

Before we move on to the proof, we state important properties of the multispecies PushTASEP. First, one can see that it is irreducible if all $\var_i$'s are positive and finite by the following argument. Suppose $\eta, \eta' \in \Omega_\lambda$. Starting from $\eta$, initiate transitions starting at the locations of species $s$'s until they are in their correct positions in $\eta'$. Now, initiate transitions starting at the $(s-1)$'s. These will clearly not affect the $s$'s. Continue this way until all particles are in their correct location in $\eta'$.

\subsection{Multiline diagrams}

Multiline diagrams were introduced
to construct the stationary distribution of the multispecies 
TASEP~\cite{ferrari-martin-2007}. The basic definition has since been extended in several ways.
One rather general set-up is given by \cite{corteel-mandelshtam-williams-2022}, incorporating parameters $t\geq 0$, 
$q\geq 0$ and variables $x_1, \dots, x_n$ to give combinatorial
constructions of the ASEP polynomials and non-symmetric Macdonald polynomials. Here we need only the basic case $t=0$ and $q=1$; 
the variables here denoted $\var_1, \dots, \var_n$ 
reflect the site-wise inhomogeneity. 

A \textit{multiline diagram} 
with contents given by $\lambda = \langle 0^{m_0}, 1^{m_1},  \dots,s^{m_s} \rangle$, 
is a configuration on a discrete cylinder with $s$ rows and 
$n = \sum_i m_i$ columns as follows. Rows are indexed $1$ to $s$ from bottom to top, and columns $1$ to $n$ from left 
to right. Each site has either a particle (denoted $\occ$)
or a vacancy (denoted $\vac$). 
In row $i$, there are $M_i = m_{i} + \cdots + m_s$ particles,
and so $n-M_i$ vacancies. 
The set of such multiline diagrams is 
denoted $\widehat{\Omega}_\lambda$.

For example, 
\begin{multline}
\label{multiline-diag-eg-111}
\widehat\Omega_{(2,1,0)} = 
\left\{ 
\begin{array}{ccc}
\occ & \vac & \vac \\
\occ & \occ & \vac
\end{array},
\begin{array}{ccc}
\occ & \vac & \vac \\
\occ & \vac & \occ
\end{array},
\begin{array}{ccc}
\occ & \vac & \vac \\
\vac & \occ & \occ
\end{array},
\begin{array}{ccc}
\vac & \occ & \vac \\
\occ & \occ & \vac
\end{array}, \right. \\
\left. 
\begin{array}{ccc}
\vac & \occ & \vac \\
\occ & \vac & \occ
\end{array}, 
\begin{array}{ccc}
\vac & \occ & \vac \\
\vac & \occ & \occ
\end{array},
\begin{array}{ccc}
\vac & \vac & \occ \\
\occ & \occ & \vac
\end{array},
\begin{array}{ccc}
\vac & \vac & \occ \\
\occ & \vac & \occ
\end{array},
\begin{array}{ccc}
\vac & \vac & \occ \\
\vac & \occ & \occ
\end{array}
\right\}
\end{multline}

We now describe a map 
$\Pi: \widehat{\Omega}_\lambda \to \Omega_\lambda$ from the set of 
multiline diagrams to the set $\Omega_\lambda$ of 
multispecies PushTASEP configurations with contents
given by the same partition $\lambda$. 

\begin{center}
\begin{figure}[h!]
\begin{tikzpicture}[scale=1]
\matrix [column sep=0.5cm, row sep = 0.1cm, ampersand replacement=\&] 
{
\node {$\vac$}; \& \node{$\vac$}; \& \node(a1) {$\occ_{4}$}; \& \node {$\vac$}; \& \node {$\vac$}; \& \node {$\vac$}; \& \node(a21) {$\occ_4$}; \& \node {$\vac$};  \& \node {$\vac$}; \\
\node {$\vac$}; \& \node {$\vac$}; \& \node(a2) {$\vac$}; \& \node {$\vac$}; \& \node {$\vac$}; \& \node(a3) {$\occ_4$}; \& \node(a22) {$\occ_4$}; \& \node(b1) {$\occ_3$}; \& \node {$\vac$}; \\
\node(b3) {$\occ_3$}; \& \node(c1){$\occ_2$}; \& \node {$\vac$}; \& \node(c21) {$\occ_2$};\& \node {$\vac$}; \& \node {$\occ_4$};\& \node(a23) {$\vac$}; \& \node(b2) {$\vac$}; \& \node(a24) {$\occ_4$};\\ 
\node(a26) {$\occ_4$}; \& \node(b4){$\occ_3$}; \& \node {$\vac_0$}; \& \node(c3) {$\occ_2$}; \&\node(c23) {$\occ_2$}; \&\node (a4){$\occ_4$};\& \node(d1) {$\occ_1$}; \& \node(d21) {$\occ_1$}; \& \node(a25) {$\vac_0$};\\
};

\draw [-,thick,red] (a1.center) -- (a3.center) -- (a4.center);
\draw[-,thick,red] (a21.center) -- (a22.center) 
-- (a24.center) --(5.2,-0.5); 
\draw [-,thick, red] (-5.2,-0.5) -- (a26.center);
\draw [-,thick, blue] (b1.center) -- (5.2, 0);
\draw [-,thick, blue] (-5.2, 0) -- (b3.center)  -- (b4.center);
\draw [-,thick,green] (c1.center) 
-- (c3.center) ;
\draw [-,thick,green] (c21.center) 
-- (c23.center);
\end{tikzpicture}

\caption{A multiline diagram $\hat{\eta} \in \widehat{\Omega}_\lambda$ with $\lambda = (4, 4, 3, 2, 2, 1, 1, 0, 0)$ so that $n=9$, with 
the bully-path projection to a configuration of the multispecies TASEP. 
Here $\Pi(\eta)$ is the configuration $(4,3,0,2,2,4,1,1,0)
\in\Omega_\lambda$, as given by the bottom row.}
\label{fig:bullypatheg}
\end{figure}
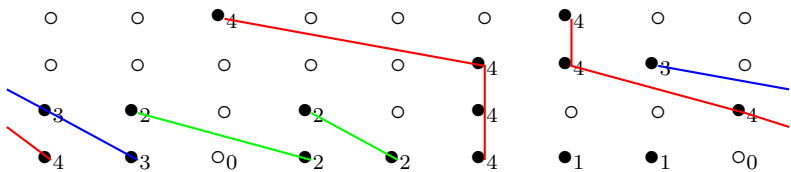
\end{center}

Suppose $\hat{\eta} \in \widehat{\Omega}_\lambda$. 
To obtain $\Pi(\heta)$ from $\heta$ we will assign each of the 
particles in the multiline diagram $\heta$ a species label
from $1$ to $s$. Any particle in row $r$ receives a label at 
least as large as $r$.
The assignment can be done recursively row by row starting at the top:
\begin{enumerate}
\item \label{it:1}
All the particles in row $s$ are labelled $s$.
\item 
Suppose we have labelled all the particles 
in rows $s, s-1,\dots, r+1$. To label the particles in row $r$, 
we take in turn each of the particles in row $r+1$, 
in decreasing order of labels. We break ties arbitrarily, 
for example from left to right. To each of those row-$(r+1)$ 
particles, we are going to match a row-$r$ particle.
\item
To match a row-$(r+1)$ particle, say in column $j$: we look 
in columns $j, j+1, \dots$ in turn (wrapping cyclically around the ring from $n$ to $1$ if needed), until we first find a particle in row $r$ which has not yet been matched. The first one we find is the match of the row-$(r+1)$ particle, and it receives the same species. 
\item 
To all the row-$r$ particles which remain unmatched once
all the row-$(r+1)$ particles have been considered, we assign
species $r$. 
\item
In this way we recursively label all the particles in the 
diagram. Finally we assign label $0$ to all the 
vacancies in row $1$. The bottom row can now be read as 
a configuration in $\Omega_\lambda$, and this is $\Pi(\heta)$. 
\end{enumerate}
An example of this procedure is given in \cref{fig:bullypatheg}.

When $\Pi(\heta)=\eta$, we may say that the multiline diagram
$\heta$ has bottom row $\eta$. 

This is exactly the same procedure as was done for the TASEP
on the ring in \cite{ferrari-martin-2007} (with the 
notational difference that the numbering of the particles is
reversed).

\subsection{Multiline process}
We now construct a process on multiline diagrams,
i.e.\ a Markov chain whose state-space is
$\hOmega_\lambda$.

As in the PushTASEP we will associate a bell which rings at
rate $1/\var_i$ to each site $i$. We think of such a bell ringing
at the corresponding site in the bottom row, i.e. at $(1,i_i)$
where $i_1=i$, and it will cause a PushTASEP jump in the (single-species) particle configuration on the bottom row. Then in turn it will trigger
a chain reaction of bells at sites $(2, i_2),\dots, (s, i_s)$
on each row of the diagram which cause PushTASEP jumps in the 
configuration of that row. 
This occurs in the following way. When the bell 
on row $r$ rings at $(r, i_r)$ for some $1\leq r\leq s$, either
\begin{itemize}
\item[(a)]
the site $(r, i_r)$ is empty, in which case the configuration on row $r$ remains unchanged, and we set $i_{r+1}=i_r$: the new bell rings immediately above; 
\item[(b)]
the site $(r, i_r)$ contains a particle. This particle
jumps to the first empty site on the same row to its right (wrapping cyclically). Let $i_{r+1}$ be the column to which it jumps, so that the new bell rings above the destination site of the jumping particle.
\end{itemize}
These PushTASEP moves in all rows occur simultaneously.
We say the transition starts in column $i_1$ and ends in column $i_{s+1}$ (the destination site of the particle jumping in row $s$). 

We call this chain
the {\em multiline PushTASEP}. See \cref{fig:multiline transition} for an example of a transition.

\begin{center}
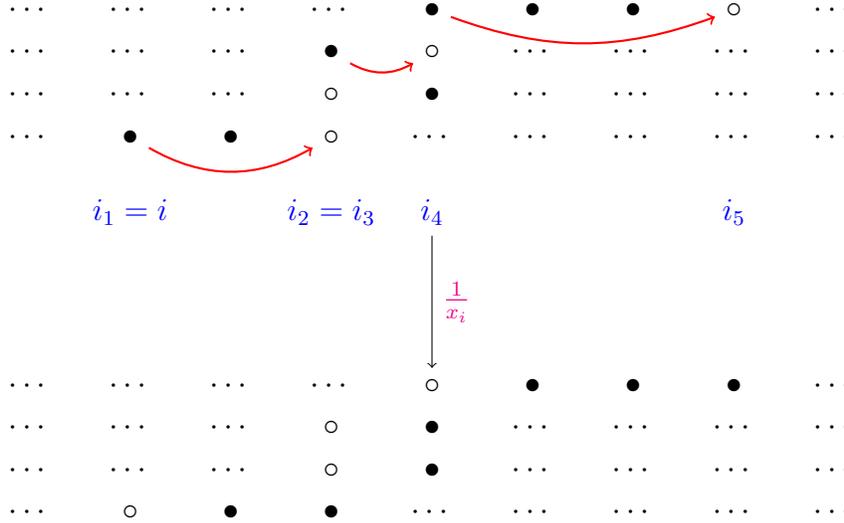
\begin{figure}[h!]
\begin{tikzpicture}[scale=1]
\matrix [column sep=0.5cm, row sep = 0.1cm, ampersand replacement=\&] 
{
\node {$\cdots$}; \& \node {$\cdots$}; \& \node {$\cdots$}; \& \node {$\cdots$}; \& \node (a1) {$\occ$}; \& \node {$\occ$}; \& \node {$\occ$}; \& \node(a2) {$\vac$};  \& \node {$\cdots$}; \\
\node {$\cdots$}; \& \node {$\cdots$}; \& \node {$\cdots$}; \& \node (b1) {$\occ$}; \& \node (b2) {$\vac$}; \& \node {$\cdots$}; \& \node {$\cdots$}; \& \node {$\cdots$}; \& \node {$\cdots$}; \\
\node {$\cdots$}; \& \node{$\cdots$}; \& \node {$\cdots$}; \& \node {$\vac$};\& \node {$\occ$}; \& \node {$\cdots$};\& \node {$\cdots$}; \& \node {$\cdots$}; \& \node {$\cdots$};\\ 
\node {$\cdots$}; \& \node (c1) {$\occ$}; \& \node {$\occ$}; \& \node (c2) {$\vac$}; \&\node (d1) {$\cdots$}; \&\node {$\cdots$};\& \node {$\cdots$}; \& \node (e1) {$\cdots$}; \& \node {$\cdots$};\\
};

\node [below of = c1] {\tcb{$i_1 = i$}};
\node [below of = c2] {\tcb{$i_2 = i_3$}};
\node (dd1) [below of = d1] {\tcb{$i_4$}};
\node [below of = e1] {\tcb{$i_5$}};

\draw [->, thick, red] (c1) to [out = -30, in = -150] (c2);

\draw [->, thick, red] (b1) to [out = -30, in = -150] (b2);

\draw [->, thick, red] (a1) to [out = -20, in = -160] (a2);

\matrix [column sep=0.5cm, row sep = 0.1cm, ampersand replacement=\&] at (0,-5)
{
\node {$\cdots$}; \& \node {$\cdots$}; \& \node {$\cdots$}; \& \node  {$\cdots$}; \& \node (a3) {$\vac$}; \& \node {$\occ$}; \& \node {$\occ$}; \& \node(a4) {$\occ$};  \& \node {$\cdots$}; \\
\node {$\cdots$}; \& \node {$\cdots$}; \& \node {$\cdots$}; \& \node (b3) {$\vac$}; \& \node (b4) {$\occ$}; \& \node {$\cdots$}; \& \node {$\cdots$}; \& \node {$\cdots$}; \& \node {$\cdots$}; \\
\node {$\cdots$}; \& \node{$\cdots$}; \& \node {$\cdots$}; \& \node {$\vac$};\& \node {$\occ$}; \& \node {$\cdots$};\& \node {$\cdots$}; \& \node {$\cdots$}; \& \node {$\cdots$};\\ 
\node {$\cdots$}; \& \node (c3) {$\vac$}; \& \node {$\occ$}; \& \node (c4) {$\occ$}; \&\node {$\cdots$}; \&\node {$\cdots$};\& \node {$\cdots$}; \& \node {$\cdots$}; \& \node {$\cdots$};\\
};

\draw [->] (dd1.south) to node [midway, right, color = magenta] {$\frac 1{\var_i}$} (a3);

\end{tikzpicture}
\caption{An illustration of a transition in the multiline PushTASEP.}
\label{fig:multiline transition}
\end{figure}
\end{center}

\subsection{Stationary distribution}

We will now derive the stationary distribution for the multiline PushTASEP.
Let $\hat{\eta} \in \widehat{\Omega}_\lambda$.
Denote by $v_i(\hat{\eta})$ the number of $\occ$'s in the $i$'th column of $\hat{\eta}$.
The \emph{weight} of $\hat{\eta}$ is defined as
\begin{equation}
\label{wt_x}
\wt(\hat{\eta}) = \prod_{i = 1}^n \var_i^{v_i(\hat{\eta})}.
\end{equation}
For the example in \cref{fig:bullypatheg}, the weight is
$\var_1^2 \var_2^2 \var_3 \var_4^2 \var_5 \var_6^3 \var_7^3 \var_8^2 \var_9$.

\begin{thm}
\label{thm:stat dist multiline}
The distribution $\hat{\pi}$ on $\hOmega_\lambda$ defined by 
\begin{equation}\label{pihatdef}
\hat{\pi} (\hat{\eta}) = \frac{\wt(\hat{\eta})}{Z_\lambda}  
= \frac{1}{Z_\lambda} \prod_{i=1}^n \var_i^{v_i(\hat{\eta})},
\end{equation}
where $Z_\lambda=\sum_{\heta'\in\hOmega_\lambda} \wt(\heta')$,
is stationary for the multiline process with content $\lambda$.
\end{thm}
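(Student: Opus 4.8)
The plan is to verify stationarity by a time-reversal argument, in the spirit of the homogeneous case in \cite{ferrari-martin-2006}. The key observation is that the proposed weight $\wt(\heta)=\prod_i\beta_i^{v_i(\heta)}$ is, up to the normalisation $Z_\lambda$, exactly the distribution under which the entries of each row are placed independently and ``proportionally to $\beta$'', subject to the row containing the correct number $N_i$ of particles; more precisely, $\hat\pi$ is the distribution in which the row-$i$ configuration is, independently across rows, a size-$N_i$ subset of $\{1,\dots,L\}$ chosen with probability proportional to the product of the $\beta_j$ over the chosen columns. So the first step is to record that $\hat\pi$ factorises over rows in this way, and in particular that the row-$i$ marginal is itself the single-species PushTASEP stationary distribution of \cref{prop:ss-singlePushTASEP} with $n_1=N_i$.

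The second and main step is to identify the time-reversal of the multiline process and show it is again a multiline process of the same type. I would argue row by row, exploiting the fact that a multiline transition is a ``stack'' of single-species PushTASEP jumps, one per row, with the jump in row $r$ starting at the column $i_r$ where the jump in row $r-1$ landed. For a single row in isolation, the single-species PushTASEP run under bell rates $1/\beta_j$ has stationary distribution given by \cref{prop:ss-singlePushTASEP}, and one checks (again via the master equation / detailed-balance-type bookkeeping) that its time-reversal is the single-species \emph{PushTASEP run clockwise}, i.e.\ a ``pull''-type dynamics, or equivalently a PushTASEP after reversing the orientation of the ring; crucially the reversed dynamics has the same stationary distribution and the reversed transition that undoes a jump ``from $i_r$ landing at $i_{r+1}$'' is a jump of the reversed chain ``from $i_{r+1}$ landing at $i_r$''. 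The point is that the coupling structure between consecutive rows is preserved under reversal: in the forward chain the landing column of row $r-1$ is the starting column of row $r$; in the reversed chain the landing column of row $r$'s reversed move is the starting column of row $r-1$'s reversed move. Hence the time-reversed multiline process is itself a ``multiline PushTASEP'' but built from the reversed single-species chains and assembled from the \emph{top row downwards} rather than bottom-up. Since this reversed process is a genuine Markov chain on $\hOmega_\lambda$ and $\hat\pi$ (being a product of single-species stationary measures, each invariant under its own reversal) is stationary for it, the general theory of time-reversal gives that $\hat\pi$ is stationary for the original multiline process.

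The third step is the routine verification that the reversed dynamics is what I claimed: one must check, transition by transition, that each forward multiline transition $\heta\to\heta'$ with its rate $1/\beta_{i_1}$ corresponds to a reversed transition $\heta'\to\heta$ whose rate, weighted by $\hat\pi(\heta')$, balances $\hat\pi(\heta)$ times the forward rate. Because the weight changes only through the columns whose occupation changes, and a PushTASEP jump from $i_r$ to $i_{r+1}$ in row $r$ removes a particle from column $i_r$ and adds one to column $i_{r+1}$, the ratio $\wt(\heta')/\wt(\heta)$ telescopes nicely across the rows, with the net effect being a single factor $\beta_{i_{s+1}}/\beta_{i_1}$ (the overall ``entrance'' minus ``exit'' column of the whole stacked transition). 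Matching this against the reversed rate $1/\beta_{i_{s+1}}$ is then a one-line check.

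The step I expect to be the main obstacle is making precise the claim that the single-species PushTASEP's time-reversal is again a PushTASEP-type dynamics \emph{in a way that is compatible with the inter-row coupling}, and in particular handling the cyclic wrap-around on the ring carefully: one has to make sure the reversed move in row $r$ genuinely starts at column $i_{r+1}$ and lands at column $i_r$, including the cases where the original jump wrapped around the boundary, so that the chain reaction in the reversed process threads through the diagram consistently. Once the single-row reversal and the bookkeeping of which column each row's (reversed) move enters and exits are set up correctly, the rest is a short computation; I would present the single-row statement as a lemma and then deduce the theorem.
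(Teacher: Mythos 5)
Your overall strategy --- guessing that the time-reversal of the multiline process under $\hat\pi$ is a ``reverse multiline PushTASEP'' in which particles jump leftward and the cascade of bells runs from the top row downwards, and then checking a pairwise balance relation using the fact that the weight ratio $\wt(\heta')/\wt(\heta)$ telescopes to $\beta_{i_{s+1}}/\beta_{i_1}$ --- is exactly the paper's proof (this is the content of \cref{lem:forward reverse}). However, as written your argument has a genuine logical gap. The pairwise relation $\hat\pi(\heta)\,r(\heta,\heta') = \hat\pi(\heta')\,r^*(\heta',\heta)$ alone does not imply stationarity: for \emph{any} probability measure one can define rates $r^*$ satisfying it. What is needed in addition (condition \eqref{eqsum} of \cref{lem:pairwise}) is that the total exit rate from each state is the same under the forward and the reversed dynamics; only then does summing the pairwise relation over $\heta$ give $\sum_\heta \hat\pi(\heta) r(\heta,\heta') = \hat\pi(\heta')\sum_\heta r^*(\heta',\heta) = \hat\pi(\heta')\sum_\heta r(\heta',\heta)$, i.e.\ the master equation. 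You never state or verify this condition. It does hold here --- in both chains the transition triggered by the bell in column $i$ is trivial precisely when column $i$ contains no particles, so both exit rates equal $\sum_{i\,:\,v_i(\heta)>0} 1/\beta_i$ --- but this check is a necessary part of the proof, not a formality.

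Separately, the alternative route you offer in your second step is circular: you assert that $\hat\pi$ is stationary for the reversed multiline process ``being a product of single-species stationary measures, each invariant under its own reversal.'' But the rows of the multiline process (forward or reversed) are coupled --- the starting column of the jump in a given row is the landing column of the jump in the adjacent row --- so the rows beyond the driving row are not autonomous single-species chains, and stationarity of the product measure for the coupled dynamics is precisely what the theorem asserts; it cannot be deduced row by row from \cref{prop:ss-singlePushTASEP}. Your third step (the pairwise balance computation), supplemented by the exit-rate check above and by a careful verification that forward and reverse transitions are in rate-preserving bijection (including the cyclic wrap-around cases you rightly flag), is the correct and complete argument, and coincides with the one in the paper.
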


Since the number of particles in row $r$ is fixed to be $M_r$,
we may factorise the partition function $Z_\lambda$ into 
a product of terms corresponding to the different rows, to obtain
\begin{align}\nonumber
Z_\lambda&=
\prod_{r=1}^s
\sum_{1\leq i_i < \dots < i_{M_r}\leq n} \var_{i_1}\dots\var_{i_{M_r}}
\\
\label{Zlambda}
&=\prod_{r=1}^s e_{M_r}(\var_1,\dots, \var_n).
\end{align}

To prove \cref{thm:stat dist multiline} 
we explicitly identify the time-reversal of the Markov chain in stationarity. The key lemma is the following useful result, which we know of through Tom Liggett and Pablo Ferrari (but we do not know a precise reference). 

\begin{lem}
\label{lem:pairwise}
Consider a continuous-time Markov chain on 
a state-space $S$ with rates $R = (r(s,s') \mid s, s' \in S, s \neq s')$. 

Suppose $\pi$ is a probability distribution on $S$, and let
$R^* = (r^*(s,s') \mid s, s' \in S, s \neq s')$ be 
another collection of rates, such that
\begin{align}
\label{eqwts}
\pi(s) r^*(s,s') =& \pi(s') r(s',s) \quad \forall s, s'\in S, s \neq s', \\
\label{eqsum}
\sum_{\substack{s' \in S \\ s \neq s'}} r(s,s') =& 
\sum_{\substack{s' \in S \\ s \neq s'}} r^*(s,s'), \quad \forall s \in S.
\end{align}
Then $\pi$ is a stationary distribution for the chain. 
\end{lem}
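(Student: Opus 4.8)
The plan is to verify directly that $\pi$ satisfies the stationarity equations $\sum_{s'} \pi(s') r(s', s) = \pi(s) \sum_{s'} r(s, s')$ for every $s \in S$, using the hypothesised rate matrix $R^*$ as an auxiliary object that encodes the time-reversed dynamics. The point of the lemma is that $R^*$ plays the role of the generator of the reversed chain: condition \eqref{eqwts} is the ``detailed-balance-like'' relation $\pi(s) r^*(s,s') = \pi(s') r(s',s)$ linking forward rates of $R$ to rates of $R^*$, and \eqref{eqsum} says that $R$ and $R^*$ have the same total exit rate out of each state.

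First I would fix a state $s \in S$ and compute the net probability flux into $s$ under $R$. Writing the stationarity condition as
\[
\sum_{\substack{s' \in S \\ s' \neq s}} \pi(s') r(s', s) \;=\; \pi(s) \sum_{\substack{s' \in S \\ s' \neq s}} r(s, s'),
\]
I would rewrite the left-hand side using \eqref{eqwts}: for each $s' \neq s$, $\pi(s') r(s', s) = \pi(s) r^*(s, s')$. Substituting termwise gives
\[
\sum_{\substack{s' \in S \\ s' \neq s}} \pi(s') r(s', s) \;=\; \sum_{\substack{s' \in S \\ s' \neq s}} \pi(s) r^*(s, s') \;=\; \pi(s) \sum_{\substack{s' \in S \\ s' \neq s}} r^*(s, s').
\]
Now apply \eqref{eqsum} to replace $\sum_{s' \neq s} r^*(s,s')$ by $\sum_{s' \neq s} r(s,s')$, which yields exactly the right-hand side of the stationarity condition. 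Since $s$ was arbitrary, $\pi$ is stationary. (One should also note that $\pi$ is a genuine probability distribution by hypothesis, so there is nothing further to check; and the state space $S$ should be assumed countable, or the sums suitably convergent, for these manipulations to be valid — in our application $S = \hOmega_\lambda$ is finite, so this is not an issue.)

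There is essentially no obstacle here: the proof is a two-line substitution once the roles of \eqref{eqwts} and \eqref{eqsum} are recognised. The only thing requiring a moment's care is the bookkeeping of the diagonal terms — one must be consistent about excluding $s' = s$ from all sums, which is why the hypotheses are stated only for $s \neq s'$ and the exit-rate sums \eqref{eqsum} are over $s' \neq s$. The conceptual content worth flagging for the reader is that when \eqref{eqwts} and \eqref{eqsum} both hold, $R^*$ is automatically the generator of the $\pi$-reversal of the chain with generator $R$; the lemma is really a convenient packaging of the standard fact that a chain is stationary with respect to $\pi$ if and only if its time-reversal (with respect to $\pi$) is again a valid Markov generator with the same holding rates. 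This is precisely the form in which it will be applied to the multiline PushTASEP, where the reversed rates $r^*$ will be read off from an explicit description of the reversed dynamics on multiline diagrams.
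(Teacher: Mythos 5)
Your proof is correct and follows essentially the same route as the paper: substitute \eqref{eqwts} termwise to convert the incoming flux into $\pi(s)\sum_{s'\neq s} r^*(s,s')$, then apply \eqref{eqsum} to match the outgoing flux. The interpretation of $R^*$ as the generator of the $\pi$-reversal is also exactly the framing the paper gives.
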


\begin{proof}
The point is that the rates $R^*$ are those for the time-reversal
of the chain in its stationary distribution $\pi$. 

It's straightforward to verify the master equation. The total incoming weight into state $s$ in distribution $\pi$ is
\[
\sum_{s' \in S} \pi(s') r(s', s) 
= \sum_{s' \in S} r^*(s', s) \pi(s) 
= \pi(s) \sum_{s' \in S} r(s, s'), 
\]
where we have used \eqref{eqwts} for the first equality
and \eqref{eqsum} for the second.
The right hand side is of course the total outgoing weight from $s$. 
\end{proof}

We now define the alternative dynamics on the set $\widehat{\Omega}_\lambda$ of multiline diagrams,
which will play the role of the time-reversed rates $R^*$ in 
\cref{lem:pairwise}. The transitions are similar to before,
but reversed both vertically and horizontally. Each row
of the diagram sees a single species PushTASEP jump from right to 
left. As before, a bell rings in column $j$ 
with rate $1/\var_j$. Now we associate that bell to the top row,
i.e.\ to the site $(s, j_{s+1})$ where $j_{s+1}:=j$. If
there is no particle there, the top row sees no change, and 
we set $j_{s}=j_{s+1}$. Otherwise, the particle there jumps to 
the nearest available site \emph{to its left}. 
(As usual, we wrap cyclically from site $1$ to site $n$ as needed.)
We call $j_{s}$ the column 
it jumps to, and generate a bell at $(s-1, j_{s})$. 
Continue this way until we reach row $1$. 
As before, all these changes happen simultaneously.
These transitions define the {\em reverse multiline PushTASEP} on $\widehat{\Omega}_\lambda$. See \cref{fig:rev multiline transition} for an example of a transition.

\begin{center}
\begin{figure}[h!]
\begin{tikzpicture}[scale=1]
\matrix [column sep=0.5cm, row sep = 0.1cm, ampersand replacement=\&] 
{
\node {$\cdots$}; \& \node (a-1) {$\cdots$}; \& \node {$\cdots$}; \& \node (a0) {$\cdots$}; \& \node (a1) {$\vac$}; \& \node {$\occ$}; \& \node {$\occ$}; \& \node(a2) {$\occ$};  \& \node {$\cdots$}; \\
\node {$\cdots$}; \& \node {$\cdots$}; \& \node {$\cdots$}; \& \node (b1) {$\vac$}; \& \node (b2) {$\occ$}; \& \node {$\cdots$}; \& \node {$\cdots$}; \& \node {$\cdots$}; \& \node {$\cdots$}; \\
\node {$\cdots$}; \& \node{$\cdots$}; \& \node {$\cdots$}; \& \node {$\vac$};\& \node {$\occ$}; \& \node {$\cdots$};\& \node {$\cdots$}; \& \node {$\cdots$}; \& \node {$\cdots$};\\ 
\node {$\cdots$}; \& \node (c1) {$\vac$}; \& \node {$\occ$}; \& \node (c2) {$\occ$}; \&\node (d1){$\cdots$}; \&\node {$\cdots$};\& \node {$\cdots$}; \& \node {$\cdots$}; \& \node {$\cdots$};\\
};

\node [above of = a2] {\tcb{$j_5 = j$}};
\node [above of = a1] {\tcb{$j_4$}};
\node [above of = a0] {\tcb{$j_3 = j_2$}};
\node [above of = a-1] {\tcb{$j_1$}};

\draw [<-, thick, red] (c1) to [out = -30, in = -150] (c2);

\draw [<-, thick, red] (b1) to [out = -30, in = -150] (b2);

\draw [<-, thick, red] (a1) to [out = -20, in = -160] (a2);

\matrix [column sep=0.5cm, row sep = 0.1cm, ampersand replacement=\&] at (0,-4)
{
\node {$\cdots$}; \& \node {$\cdots$}; \& \node {$\cdots$}; \& \node {$\cdots$}; \& \node (a3) {$\occ$}; \& \node {$\occ$}; \& \node {$\occ$}; \& \node(a4) {$\vac$};  \& \node {$\cdots$}; \\
\node {$\cdots$}; \& \node {$\cdots$}; \& \node {$\cdots$}; \& \node (b3) {$\occ$}; \& \node (b4) {$\vac$}; \& \node {$\cdots$}; \& \node {$\cdots$}; \& \node {$\cdots$}; \& \node {$\cdots$}; \\
\node {$\cdots$}; \& \node{$\cdots$}; \& \node {$\cdots$}; \& \node {$\vac$};\& \node {$\occ$}; \& \node {$\cdots$};\& \node {$\cdots$}; \& \node {$\cdots$}; \& \node {$\cdots$};\\ 
\node {$\cdots$}; \& \node (c3) {$\occ$}; \& \node {$\occ$}; \& \node (c4) {$\vac$}; \&\node {$\cdots$}; \&\node {$\cdots$};\& \node {$\cdots$}; \& \node {$\cdots$}; \& \node {$\cdots$};\\
};

\draw [->] (d1) to node [midway, right, color = magenta] {$\frac 1{\var_j}$} (a3);

\end{tikzpicture}
\caption{An illustration of a transition in the reverse multiline PushTASEP.}
\label{fig:rev multiline transition}
\end{figure}
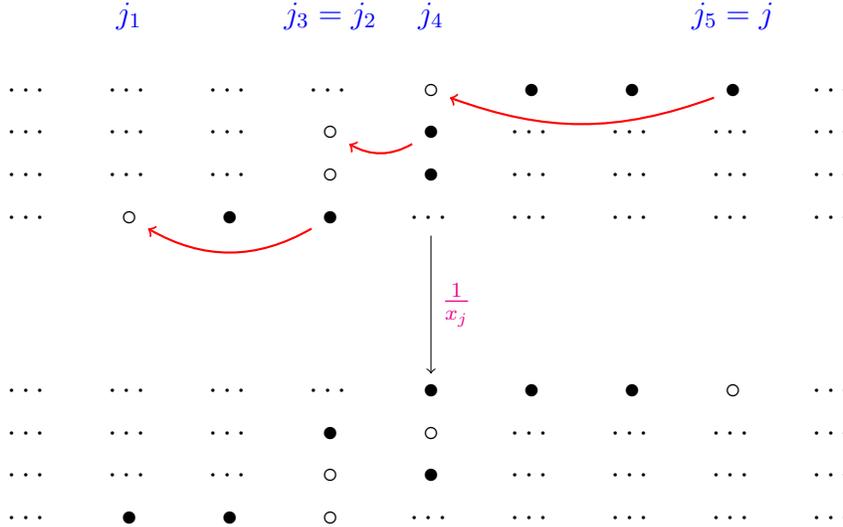
\end{center}

\begin{lem}
\label{lem:forward reverse}
Let $\hat{\eta},\hat{\eta}' \in \widehat{\Omega}_\lambda$.
If there is a transition from $\hat{\eta}$ to $\hat{\eta}'$ in the multiline PushTASEP with rate $1/\var_i$ ending at site $(s,j)$, then there is a transition from $\hat{\eta}'$ to $\hat{\eta}$ in the reverse multiline PushTASEP with rate $1/\var_j$, and we have $\hat{\pi} (\hat{\eta})/\var_i  = \hat{\pi}(\hat{\eta}')/\var_j$,
where $\hat{\pi}$ is defined by \eqref{pihatdef}.
Further, the 
rate of jumping away from $\hat{\eta}$ is the same in both processes.
\end{lem}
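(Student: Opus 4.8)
The plan is to prove Lemma~\ref{lem:forward reverse} by carefully tracking what a single multiline transition does to each column, and then checking the three assertions (existence of the reverse transition, the weight relation $\hat\pi(\heta)/\beta_i = \hat\pi(\heta')/\beta_j$, and equality of total jump rates) one at a time. First I would set up notation: a forward transition triggered by the bell at site $i$ produces a sequence of columns $i = i_1, i_2, \dots, i_{s+1}$, where $i_r$ is the column in which the bell rings on row $r$ (so $i_r = i_{r+1}$ when row $r$ has a vacancy at $i_r$, and otherwise row $r$'s particle at $i_r$ jumps rightward to the next empty column $i_{r+1}$). The key structural observation is that on row $r$, the transition removes a particle from column $i_r$ and places one in column $i_{r+1}$ (when there is a jump), or does nothing (when $i_r = i_{r+1}$ is empty). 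I would emphasize that, because the bell propagates to column $i_{r+1}$ on the next row up, the net effect on column occupancy is that a $\occ$ "moves" up-and-to-the-right along the staircase $(1,i_1)\to(1,i_2)\to(2,i_2)\to(2,i_3)\to\cdots$.

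Next, for the reverse direction: I would describe the reverse multiline PushTASEP transition triggered by the bell at site $j = i_{s+1}$, producing columns $j = j_{s+1}, j_s, \dots, j_1$ with leftward jumps on each row. The main claim is that running the reverse transition from $\heta'$ (triggered at column $j = i_{s+1}$) exactly retraces the forward staircase: namely $j_{r} = i_r$ for all $r$, and the reverse transition undoes each row's move, recovering $\heta$. I would verify this inductively from the top row down: on row $s$ in $\heta'$, column $i_{s+1}$ is occupied (it just received the jumping particle) and the nearest empty site to its left is $i_s$ (since the forward jump cleared $i_s$ and every column strictly between $i_s$ and $i_{s+1}$ on row $s$ was occupied both before and after), so the reverse jump on row $s$ sends that particle from $i_{s+1}$ back to $i_s$, restoring row $s$ of $\heta$, and generates the next reverse bell at $(s-1, i_s)$. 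Repeating this argument row by row gives $\heta' \to \heta$ in the reverse chain with rate $1/\beta_j$. The one subtlety, which I would flag as the main obstacle, is handling the "empty" case (b)-versus-(a): when $i_r = i_{r+1}$ because row $r$ had a vacancy at $i_r$, one must check that the reverse transition also sees a vacancy at $j_{r+1} = i_{r+1} = i_r$ on row $r$ and hence also does nothing, setting $j_r = j_{r+1} = i_r$ — this requires confirming that the forward transition left row $r$ unchanged, which it did, and that the incoming reverse bell column agrees, which follows from the induction hypothesis $j_{r+1} = i_{r+1}$. Cyclic wraparound needs a remark but causes no real difficulty since the "nearest empty site" is well-defined on the ring as long as not all sites in a row are occupied, which holds because $N_r \le L$ and in fact $N_1 \le L$ strictly unless $n_0 = 0$ (and when $n_0 = 0$ row $1$ is full, but then no particle can jump anywhere — a degenerate case worth one sentence).

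For the weight relation, once we know the forward transition moves exactly one $\occ$ out of column $i$ (the bottom-row source, where row $1$ must have had a particle for anything to happen) and one $\occ$ into column $j = i_{s+1}$, while every other column's total occupancy is unchanged — because on each intermediate row the particle leaving column $i_r$ is compensated by the particle entering column $i_{r+1}$, and these telescope — we get $v_i(\heta') = v_i(\heta) - 1$, $v_j(\heta') = v_j(\heta) + 1$, and $v_k(\heta') = v_k(\heta)$ for $k \ne i,j$. Hence $\wt(\heta')/\wt(\heta) = \beta_j/\beta_i$, i.e. $\hat\pi(\heta)/\beta_i = \hat\pi(\heta')/\beta_j$, using the definition \eqref{pihatdef}. (If $i = j$, meaning the whole staircase returned to its starting column, then $\heta = \heta'$ and the statement is trivial; I would note this case.)

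Finally, equality of the total outgoing rate from $\heta$ in the two processes: in the forward chain the bell at every site $i$ rings at rate $1/\beta_i$, and it triggers a genuine transition (i.e. $\heta' \ne \heta$) precisely when column $i$ is occupied in row $1$ and the staircase does not return to column $i$; in all other cases the bell produces a self-loop. The same count applies in the reverse chain with the bell at every site $j$ at rate $1/\beta_j$, triggering a genuine transition exactly when column $j$ is occupied in row $s$ and the reverse staircase does not return to $j$. The bijection $\heta \leftrightarrow \heta'$ just established (forward transitions from $\heta$ biject with reverse transitions into $\heta$, each forward rate $1/\beta_i$ matched to reverse rate $1/\beta_j$) together with the complementary bijection for transitions \emph{out of} $\heta$ in the reverse chain shows the multisets of outgoing rates coincide; since the total rate $\sum_j 1/\beta_j$ of all bells is the same and self-loops contribute zero to the jump-away rate in both, the jump-away rates from $\heta$ agree. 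I would phrase this last point carefully to make clear it is exactly the input needed to invoke Lemma~\ref{lem:pairwise} (conditions \eqref{eqwts} and \eqref{eqsum}) in the proof of Theorem~\ref{thm:stat dist multiline}. The bulk of the writing effort, and the only place real care is required, is the row-by-row induction identifying the reverse staircase with the forward one, especially the bookkeeping for empty rows and cyclic wraparound.
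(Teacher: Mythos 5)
Your identification of the reverse staircase ($j_r = i_r$ for all $r$, proved row by row from the top down) and the telescoping column-count argument for $\hat\pi(\heta)/\beta_i = \hat\pi(\heta')/\beta_j$ are correct, and in fact more detailed than the paper, which essentially leaves the reversal of the staircase to a comparison of figures and goes straight to the column counts. However, your argument for the third claim (equality of the total outgoing rates) contains a genuine error. You assert that the bell at column $i$ triggers a genuine transition ``precisely when column $i$ is occupied in row $1$ and the staircase does not return to column $i$.'' Both halves of this characterization are wrong. First, when $(1,i)$ is vacant the bell does not die: by rule (a) of the dynamics it propagates to $(2,i)$, $(3,i)$, and so on, so a bell at column $i$ is a self-loop if and only if the \emph{entire} column $i$ is empty. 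Second, a staircase that returns to its starting column ($j=i$) need not fix the configuration: take $L=3$, two rows, row $1 = \vac\,\occ\,\occ$, row $2 = \occ\,\vac\,\vac$, and ring the bell at column $2$; the row-$1$ particle wraps to column $1$, the row-$2$ particle at column $1$ then jumps to column $2$, so $j=i=2$ but $\heta'\neq\heta$ (only the column counts, and hence the weight, are preserved). This also invalidates your parenthetical claim that $i=j$ forces $\heta=\heta'$.

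The error is material because your comparison of outgoing rates relies on matching the self-loop sets of the two processes, and with your criteria the forward set (governed by row $1$) and the reverse set (governed by row $s$) would not coincide; the appeal to an unestablished ``complementary bijection'' does not repair this. The fix is simple and is exactly the paper's argument: in the forward process the bell at column $i$ yields a genuine transition if and only if column $i$ contains at least one $\occ$ (the lowest such particle moves to a different site in its row, so the state changes), and by the symmetric statement the same criterion governs the reverse process; hence both outgoing rates equal $\sum 1/\beta_i$ over the nonempty columns $i$ of $\heta$. You should also correct the aside that ``row $1$ must have had a particle for anything to happen'' in the weight computation --- the particle removed from column $i$ sits in the lowest nonempty row of that column, not necessarily row $1$, though the telescoping and the conclusion $v_i(\heta')=v_i(\heta)-1$, $v_j(\heta')=v_j(\heta)+1$ are unaffected.
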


\begin{proof}
It might be useful to compare the transitions in \cref{fig:multiline transition,fig:rev multiline transition}, which are reverses of each other, to follow the proof. 

First of all, the number of $\vac$'s in all columns except $i$ and $j$ are the same in both $\hat{\eta}$ and $\hat{\eta}'$. Also, $\hat{\eta}$ has one more (resp. one fewer) $\occ$ in column $i$ (resp. column $j$) compared to $\hat{\eta}'$. Therefore, $\hat{\pi} (\hat{\eta})/\hat{\pi}(\hat{\eta}') = \var_i/\var_j$, proving the first claim.

Now, focus on the total outgoing rate of transitions from $\hat{\eta}$ in the multiline PushTASEP. For each $i$, there is a transition with rate $1/\var_i$ except when there is no particle, i.e. $\occ$, in column $i$. Exactly the same holds for the reverse process. Therefore, the total outgoing rate is the same, proving the second claim.
\end{proof}

\begin{proof}[Proof of \cref{thm:stat dist multiline}]
\cref{lem:forward reverse} now tells us that the conditions
of \cref{lem:pairwise} hold for the distribution 
$\hat{\pi}$ and for the rates of the forward and reverse
multiline processes. Hence indeed $\hat\pi$ is stationary. 
\end{proof}

\subsection{Projection to the multispecies PushTASEP}

\begin{lem}\label{lem:projection}
Consider the multiline process in stationarity, say 
$(\heta_u, u\geq 0)$ where $u$ is the time parameter. 
Then the projection $(\Pi(\heta_u), u\geq 0)$ 
is the multispecies PushTASEP process in stationarity.  
\end{lem}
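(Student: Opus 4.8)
The plan is to establish the lumping (projection) property by checking that the multiline process, when observed through the map $\Pi$, is itself a Markov chain whose transition rates coincide with those of the multispecies PushTASEP. The standard tool here is the classical criterion for lumpability: if for every pair of multiline diagrams $\heta, \heta'$ with $\Pi(\heta)=\Pi(\heta'')$ we have, for each target configuration $\zeta\in\Omega_\lambda$, that $\sum_{\heta_1:\,\Pi(\heta_1)=\zeta} r(\heta,\heta_1) = \sum_{\heta_1:\,\Pi(\heta_1)=\zeta} r(\heta'',\heta_1)$, then the projection is Markov with the common value as its rate. In our setting the rates are indexed by which bell rings, so it is cleaner to prove the following sharper statement: a bell ringing at site $i$ (rate $1/\beta_i$) takes a multiline diagram $\heta$ to a diagram $\heta'$ with the property that $\Pi(\heta')$ is obtained from $\Pi(\heta)$ by exactly the PushTASEP transition triggered by a bell at site $i$. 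Since in the multiline process each bell (at rate $1/\beta_i$) deterministically determines the transition, this diagram-level commutation immediately gives lumpability with the correct rates.

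So the core of the argument is a deterministic claim: \emph{the bully-path projection $\Pi$ intertwines one step of the multiline PushTASEP triggered at column $i$ with one step of the multispecies PushTASEP triggered at site $i$.} First I would recall the structure of a multiline transition: the bell at $(1,i)$ causes a single-species PushTASEP jump on row $1$ from column $i_1=i$ to column $i_2$, which triggers a jump on row $2$ from $i_2$ to $i_3$, and so on up to row $s$. Then I would analyze how the species labels assigned by $\Pi$ change. The key observation is that the bully-path matching between consecutive rows is essentially a non-crossing/greedy matching, and moving a single particle one "slot" to the right in row $r$ (the PushTASEP jump) perturbs the matching between row $r$ and row $r+1$, and between row $r-1$ and row $r$, in a controlled way — precisely so that the species label that "exits" at site $i$ in the bottom row is the label of the particle originally at $(1,i)$, and each displaced particle picks up the label of the particle it displaced, exactly mirroring the musical-chairs description of the multispecies PushTASEP dynamics given in the introduction. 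This is the same compatibility that was verified for the homogeneous TASEP on the ring in Ferrari--Martin \cite{ferrari-martin-2007}; the inhomogeneity is irrelevant to this step since it only affects the \emph{rates} at which bells ring, not the combinatorial effect of a bell, so I would state that the proof is identical to the one in \cite{ferrari-martin-2007} and either reproduce it briefly or cite it.

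Concretely, the steps in order: (1) state the lumpability criterion and reduce the lemma to the deterministic commutation claim above; (2) recall precisely the chain-reaction structure of a multiline transition triggered at column $i$; (3) prove by induction on the row index $r$ (from top to bottom) that after the transition, the bully-path labels on row $r$ are consistent with performing the multispecies jump — more precisely, track the single "special" particle whose label is about to be displaced and show the matching is otherwise unchanged; (4) conclude that the bottom row $\Pi(\heta')$ differs from $\Pi(\heta)$ exactly by the multispecies PushTASEP move at site $i$; (5) invoke \cref{thm:stat dist multiline} to identify the projected stationary distribution, so that "in equilibrium" statements transfer, and note that lumpability of the dynamics plus agreement of the projected initial distribution gives that the projected process is the multispecies PushTASEP in equilibrium. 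I expect step (3) — the bookkeeping of how the greedy inter-row matching is affected by a single PushTASEP jump — to be the main obstacle, since one must carefully handle the cyclic wrap-around and the case where the jump in some row is "trivial" (lands on the same column because the site was empty). The cleanest route is probably to argue that the matching can be computed in a way that is "local" around the moving particles, so that only $O(1)$ matches change per row, and to match these changes term-by-term against the musical-chairs rule; but since this is exactly the content of the corresponding result in \cite{ferrari-martin-2007}, I would keep the exposition brief and refer there for the details that are unchanged.
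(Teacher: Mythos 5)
Your overall route is the same as the paper's: reduce the lemma, via lumpability of the common Poisson bell structure, to the deterministic commutation claim that a bell at site $i$ commutes with the projection $\Pi$ (this is exactly the paper's \cref{lem:jumpj}), and then transfer stationarity through the projection. Where you differ is in how you propose to prove that commutation claim. You suggest a direct combinatorial analysis, inducting over rows and tracking how the greedy bully-path matching between consecutive rows is perturbed by the chain of single-row jumps. The paper instead runs an induction on the \emph{number of lines} (viewing the effect of a bell on an $s$-line diagram as a bottom-row single-species jump composed with a multispecies jump on the labelled rows $2,\dots,s$), combined with the observation from \cref{prop:colouring} that it suffices to verify correctness in every single-species projection; together these reduce everything to a two-line base case with only a handful of configurations to check. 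The paper's reduction buys a much smaller and cleaner case analysis, whereas your direct matching-perturbation argument, while plausible, is exactly where you anticipate the ``main obstacle'' and where the cyclic wrap-around and trivial-jump cases would demand careful bookkeeping. One concrete correction: the compatibility you want to cite is \emph{not} the one verified for the multispecies TASEP in \cite{ferrari-martin-2007} --- the multiline dynamics there projects to TASEP jumps, which is a different transition rule. The relevant precedent is the multispecies Hammersley--Aldous--Diaconis process of \cite{ferrari-martin-2006} (equivalent to the PushTASEP by reversing the order of the particles), and that is the reference the paper leans on for the base-case checks; citing the 2007 TASEP result as if it covered the PushTASEP dynamics would leave a genuine gap in your write-up. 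Your final step, invoking \cref{thm:stat dist multiline} to identify the projected stationary law, is correct and in fact slightly more explicit than the paper's treatment.
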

We have constructed both the PushTASEP
and the multiline process as processes governed by
independent Poisson processes of bells with rate
$1/\var_j$ at site $j$, for $1\leq j\leq n$. 
To prove \cref{lem:projection}, it then suffices to check the following property. 

\begin{lem}\label{lem:jumpj}
Suppose $\heta\in\hOmega_\lambda$ and 
$\eta=\Pi(\heta)\in \Omega_\lambda$.
Let $\heta^j$ be the state resulting from $\heta$ when a 
bell rings at site $j$ in the multiline process. 
Let $\eta^j$ be the state resulting from $\eta$ when a bell
rings at site $j$ in the multispecies process. 
Then $\eta^j=\Pi(\heta^j)$.
\end{lem}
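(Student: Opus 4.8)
The plan is to track how a single bell ring at site $j$ propagates through the multiline diagram row by row, and to show that applying the bully-path projection $\Pi$ to the new diagram $\heta^j$ produces exactly the configuration obtained by performing one multispecies PushTASEP jump on $\eta = \Pi(\heta)$. The natural way to organise this is to set up an induction on the rows $r = 1, 2, \dots, s$, maintaining as an invariant a precise description of how the species labels assigned to the particles in rows $1, \dots, r$ of $\heta^j$ differ from those in $\heta$. Concretely, I expect the invariant to say: after processing rows $1$ through $r$, the multiset of labels in each row is unchanged, and the matching structure differs from the original only along a single "displaced path'' -- the particle that the row-$r$ bell moved (say from column $i_r$ to column $i_{r+1}$) has changed its label, and this is the only discrepancy; moreover the label it lost is precisely $\eta_j$-minus-one's worth of species strength in the bookkeeping sense, i.e. the species being "carried'' matches the species displaced at the corresponding stage of the musical-chairs dynamics on $\eta$.

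First I would fix notation for the chain reaction: the bell at $j$ triggers PushTASEP jumps at columns $i_1 = j, i_2, \dots, i_s$ with the jump in row $r$ going from $i_r$ to $i_{r+1}$ (with $i_{r+1} = i_r$ when site $(r,i_r)$ is empty, in which case nothing changes in that row). In parallel, on the configuration $\eta$ the bell at $j$ sets off the displacement sequence $j = j_1 \to j_2 \to \cdots$, where a particle of species $i_1 = \eta_j$ lands on the first site clockwise holding something strictly weaker, species $i_2 := \eta_{j_2} < i_1$, which in turn is displaced, and so on. The core of the argument is to show these two processes are in lockstep under $\Pi$: the sequence of species $i_1 > i_2 > \cdots$ displaced in the musical-chairs dynamics on $\eta$ coincides with the sequence of species labels carried across rows in the multiline diagram, and the destination columns agree.

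I would prove this by carefully re-running the recursive labelling procedure (steps (1)--(5) in Section \ref{sec:multiline}) on $\heta^j$ and comparing it to the run on $\heta$. Here the key local fact is the one already exploited in \cite{ferrari-martin-2007} for the homogeneous TASEP: moving a single particle in one row by a PushTASEP jump, and then re-doing the matching between that row and the one above, changes the labels of the particles in exactly one "bully path'', and the bottom-row effect is a single PushTASEP-style displacement with the same species-ordering behaviour. Since the rates $\beta_i$ play no role whatsoever in either $\Pi$ or in which transition a bell at site $j$ triggers (the dynamics are deterministic given the bell location, as noted after \eqref{multilrep-trans-eg}), the combinatorial content is literally the same as in the homogeneous case, and I would invoke that correspondence, spelling out the inductive step and the base case $r = s$ (where row $s$ simply sees a single-species PushTASEP jump and all labels stay $s$).

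The main obstacle is bookkeeping the matching carefully enough at the inductive step: when the row-$r$ particle at column $i_r$ jumps to $i_{r+1}$, one must verify that re-matching row $r$ to row $r+1$ reproduces exactly the "new bell at $(r+1, i_{r+1})$'' behaviour, i.e. that the unique changed label propagates upward in precisely the way the multiline transition prescribes, and symmetrically that the downward reading of labels into the bottom row reproduces the musical-chairs displacements on $\eta$. This requires checking that the cyclic "first unmatched particle clockwise'' rule interacts correctly with the displaced particle, including the wrap-around case on the ring. Once this local re-matching lemma is in hand, Lemma \ref{lem:jumpj} follows, and Lemma \ref{lem:projection} is immediate since both processes are driven by the same Poisson bells and $\Pi$ intertwines single bell-rings; the identification of the stationary distribution in Theorem \ref{thm:stat prob sum} then follows by pushing forward $\hat\pi$ from Theorem \ref{thm:stat dist multiline} under $\Pi$.
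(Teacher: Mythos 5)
Your high-level framing is right: the rates play no role, the whole content is the combinatorial statement that $\Pi$ intertwines a single bell ring, and \cref{lem:projection} and \cref{thm:stat prob sum} follow once that is established. But the invariant you build the argument on is false. You claim the multiline chain reaction and the musical-chairs cascade on $\eta$ run ``in lockstep'': that the destination columns $i_2,i_3,\dots$ of the row-by-row jumps agree with the displacement sites $j_2,j_3,\dots$ in $\eta$, and that the species carried match. They do not. Already for $s=2$, $L=3$, take $\heta$ with row $2=(\occ,\vac,\vac)$ and row $1=(\occ,\occ,\vac)$, so $\eta=(2,1,0)$. A bell at column $1$ sends the row-$1$ particle directly to column $3$ (the first vacancy of row $1$, i.e.\ the \emph{terminal} site of the whole cascade), and nothing moves in row $2$; yet in $\eta$ the species-$2$ particle lands at site $2$, not $3$. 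The lemma still holds ($\Pi(\heta^1)=(0,2,1)=\eta^1$), but only because the \emph{matching} of the unmoved row-$2$ particle shifts from column $1$ to column $2$: the intermediate displacements of the musical chairs are produced entirely by the re-matching, not by the columns visited by the propagating bell, and the number of rows $s$ bears no relation to the number of displaced species. So the lockstep correspondence cannot be the invariant you carry. Relatedly, your induction runs bottom-up over rows $1,\dots,r$ while claiming control of the labels of the processed rows; but the label of a row-$r$ particle depends on all rows \emph{above} $r$, so that induction is not well-founded as stated.

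What actually makes the verification finite is a different pair of reductions, which your proposal does not identify. First, induct on the number of lines: rows $2,\dots,s$ form an $(s-1)$-line diagram, the bell arriving at $(2,i_2)$ induces, by the inductive hypothesis, a multispecies jump on its projection, and one only has to splice this with the single-species jump on row $1$. Second, by \cref{prop:colouring} it suffices to check each single-species projection of the identity $\eta^j=\Pi(\heta^j)$. Together these reduce everything to the two-line case $s=2$, where a short explicit case check (bell at a vacancy, a $1$ or a $2$ in the bottom row; particle or vacancy at the triggered top-row site) completes the proof, as in \cite{ferrari-martin-2006}. Your write-up defers exactly this content to an unproved ``local re-matching lemma,'' and the heuristic offered in its place is incorrect, so the proposal has a genuine gap at its central step.
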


\begin{proof}[Sketch of proof of \cref{lem:jumpj}] 
Note that this property does not involve the values of the
parameters $\var_j$ at all, so that the same argument as in the 
homogeneous case applies. There are various approaches,
each of which results in a certain amount of case-checking. 
We outline briefly the argument used 
in \cite{ferrari-martin-2006} which efficiently
reduces the cases to be checked.

The approach is by induction on the number of lines. 
If we already have the result for an $(s-1)$-line diagram,
we can consider the effect of a bell in the $s$-line multiline process 
as a combination of a single species PushTASEP jump on the bottom row,
and a multispecies PushTASEP jump on the labelled configuration in row $2$.  

The other key point is the interpretation of the multispecies
process as a coupling of single species processes, 
as explained at the end of \cref{sec:single}. 
If the effect of the bell ringing at $j$ is correct in all the projections to one-species processess, then it's correct in the multispecies process, by definition. 

Combining these two ingredients, we end up just with the base case
of $s=2$ and a two-line diagram. Then there are just a few cases to check, depending on whether the bell in the bottom row rings at the site of a vacancy or a species-$1$ or a species-$2$ particle, 
and whether the bell on the top row rings at the site of a particle
or a vacancy. The cases are checked in \cite{ferrari-martin-2006} for the case of the process on $\ZZ$, but the details are essentially identical for the process on the ring. 
\end{proof}

From \cref{lem:projection} we immediately obtain the following result, which says that the stationary distribution 
of a state $\eta$ of the multispecies PushTASEP is proportional 
to the sum of weights of the multiline diagrams
with bottom row $\eta$. 

\begin{thm}
\label{thm:stat prob sum}
The stationary probability of $\eta \in \Omega_\lambda$ in 
the multispecies PushTASEP is
given by
\[
\pi(\eta) = 
\sum_{
\substack{\heta\in \hOmega_\lambda:\\
\Pi(\heta)=\eta}
}
\hat{\pi}(\hat{\eta})
=
\frac{1}{Z_\lambda}
\sum_{
\substack{\heta\in \hOmega_\lambda:\\
\Pi(\heta)=\eta}
}
\wt(\heta).
\]
\end{thm}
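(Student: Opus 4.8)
The plan is to obtain \cref{thm:stat prob sum} as an essentially immediate corollary of \cref{lem:projection} together with \cref{thm:stat dist multiline}: all the real content has already been established there, and what remains is to record the standard fact that lumping a stationary Markov chain produces a stationary chain whose invariant law is the pushforward measure.

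First I would run the multiline PushTASEP in equilibrium, that is, with $\heta_0\sim\hat\pi$, where $\hat\pi$ is the distribution from \cref{thm:stat dist multiline}; since $\hat\pi$ is stationary, $\heta_u\sim\hat\pi$ for every $u\geq 0$. Consequently the projected process $(\Pi(\heta_u),u\geq 0)$ has, at each fixed time $u$, the pushforward law which assigns to $\eta\in\Omega_\lambda$ the mass $\sum_{\heta:\,\Pi(\heta)=\eta}\hat\pi(\heta)$. On the other hand, \cref{lem:projection} identifies $(\Pi(\heta_u))$ with the multispecies PushTASEP in equilibrium, so its one-time marginal is a stationary distribution of the multispecies PushTASEP. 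Since the multispecies PushTASEP is irreducible on the finite set $\Omega_\lambda$ (as noted just before the statement of \cref{lem:projection}), it has a unique stationary distribution $\pi$, and hence
\[
\pi(\eta)=\sum_{\substack{\heta\in\hOmega_\lambda:\\ \Pi(\heta)=\eta}}\hat\pi(\heta)
=\frac{1}{Z_\lambda}\sum_{\substack{\heta\in\hOmega_\lambda:\\ \Pi(\heta)=\eta}}\wt(\heta),
\]
the last equality being the definition \eqref{pihatdef}.

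I expect essentially no obstacle beyond what is already proved; the only point deserving a word of care is why the projected process is genuinely Markov and stationary, and this is precisely the content of \cref{lem:projection} (itself reduced to the single-transition compatibility in \cref{lem:jumpj}). If one preferred an argument that avoids invoking ``equilibrium'' of the projected chain altogether, one could instead verify directly that the pushforward of $\hat\pi$ is invariant for the generator of the multispecies PushTASEP: group the states of $\hOmega_\lambda$ into the fibres $\Pi^{-1}(\eta)$, and use \cref{lem:jumpj} to match, bell by bell and fibre by fibre, the flow of $\hat\pi$-mass under the multiline dynamics with the flow of pushforward mass under the PushTASEP dynamics; uniqueness of the stationary distribution then again forces equality with $\pi$. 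Either way the argument is short, the substance residing entirely in \cref{thm:stat dist multiline} and \cref{lem:projection}.
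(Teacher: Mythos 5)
Your proposal is correct and follows exactly the route the paper takes: the paper states that \cref{thm:stat prob sum} follows immediately from \cref{lem:projection} (with \cref{thm:stat dist multiline} supplying $\hat\pi$), and your write-up simply makes explicit the standard lumping/pushforward step together with the appeal to uniqueness of the stationary distribution via irreducibility. There is no gap and no difference in approach.
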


\begin{proof}[Proof of \cref{thm:multilrep-statprob}]
The result of \cref{thm:multilrep-statprob}
now follows immediately from Proposition 4.1 of 
\cite{corteel-mandelshtam-williams-2022}, which
(in a more general context $t\geq 0$ and $q\geq 0$)
expresses the ASEP polynomial 
$\ASEP_\eta(\var_1, \dots, \var_n; q, t)$ 
as a sum of weights of multiline diagrams
with bottom row given by $\eta$. The form 
of the partition function $Z_\lambda$ is given 
by \eqref{Zlambda}. 
\end{proof}

\section{Observables for the multispecies PushTASEP}
\label{sec:obs}

We generalize formulas for observables in the single species PushTASEP given in \cref{sec:single} to the multispecies case here.
As usual we will fix the content to be defined by $\lambda = \langle 0^{m_0}, \dots, s^{m_s} \rangle$ on $n$ sites.

\subsection{Density and current}
We begin with formulas for the density and current in \cref{prop:curr-singlePushTASEP}.
Recall the Schur polynomials $s_\lambda$ from \eqref{def-schur}. One important formula that we will use repeatedly is the the dual Jacobi-Trudi identity~\cite[Corollary 7.16.2]{stanley-ec2} for the Schur polynomials for partitions with two columns. For these partitions, it is written as
\begin{equation}
\label{dual jt}
s_{\langle 1^b, 2^a \rangle}(x_1, \dots, x_n)
= \det \begin{pmatrix}
e_{a+b}(x_1, \dots, x_n) & e_{a+b+1}(x_1, \dots, x_n) \\
e_{a-1}(x_1, \dots, x_n) & e_a(x_1, \dots, x_n)
\end{pmatrix}
\end{equation}

Recall that $\eta^{(i)}_k$ denotes the occupation variable for the particle of species $i$ at site $k$. By invariance of the model under cyclic permutation of the site lables and rates, it is enough to focus on the density at the first site.
Recall also the notation 
$M_r = m_r + \cdots + m_s$ for $1 \leq r \leq s$.

\begin{prop}
\label{prop:dens}
The density of the particle of species $i$ in the first site in the multispecies PushTASEP with content $\lambda$ is given by
\[
\aver{\eta^{(i)}_1} = \var_1 \frac{s_{\langle 1^{M_{j+1}} , 2^{m_j-1} \rangle}(\var_2,\dots,\var_n)}
{e_{(M_j, M_{j+1})}(\var_1, \dots, \var_n) }.
\]
\end{prop}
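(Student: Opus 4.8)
The plan is to derive this from the single-species density formula (\cref{cor:density-singlePushTASEP}) via the basic coupling of \cref{prop:colouring}, and then to recognise the resulting rational function as the claimed Schur expression using the dual Jacobi--Trudi identity \eqref{dual jt}. No new probabilistic input beyond Sections \ref{sec:single} and \ref{sec:multiline} is required: the work is purely a manipulation of elementary symmetric polynomials. (In the displayed formula I read the index $j$ as the species index $i$ of the statement.)

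First I would pass to the single-species projections. For each $r$, the weakly order-preserving map $\phi^r$ (with $\phi^r(k)=0$ for $k<r$ and $\phi^r(k)=1$ for $k\ge r$) lumps the multispecies PushTASEP to a single-species PushTASEP with $N_r$ particles, on the same sites and with the same rates $\beta_1,\dots,\beta_L$; since that chain is irreducible, the pushforward of $\pi$ under $\phi^r$ is its unique stationary distribution. Hence, by \cref{cor:density-singlePushTASEP} (with $n_1$ replaced by $N_r$, evaluated at site $1$),
\[
\mathbb{P}_\pi(\eta_1\ge r)=\frac{\beta_1\,e_{N_r-1}(\beta_2,\dots,\beta_L)}{e_{N_r}(\beta_1,\dots,\beta_L)}.
\]
Since $\{\eta_1=i\}=\{\eta_1\ge i\}\setminus\{\eta_1\ge i+1\}$ and the second event is contained in the first, we get $\aver{\eta^{(i)}_1}=\mathbb{P}_\pi(\eta_1\ge i)-\mathbb{P}_\pi(\eta_1\ge i+1)$, i.e.
\[
\aver{\eta^{(i)}_1}=\beta_1\left(\frac{e_{N_i-1}(\beta_2,\dots,\beta_L)}{e_{N_i}(\beta_1,\dots,\beta_L)}-\frac{e_{N_{i+1}-1}(\beta_2,\dots,\beta_L)}{e_{N_{i+1}}(\beta_1,\dots,\beta_L)}\right).
\]

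Next I would put these two fractions over the common denominator $e_{N_i}(\beta_1,\dots,\beta_L)\,e_{N_{i+1}}(\beta_1,\dots,\beta_L)=e_{(N_i,N_{i+1})}(\beta_1,\dots,\beta_L)$ (note $N_i\ge N_{i+1}$, so $(N_i,N_{i+1})$ is a partition), and simplify the numerator using the single-variable deletion recurrence $e_m(\beta_1,\dots,\beta_L)=e_m(\beta_2,\dots,\beta_L)+\beta_1 e_{m-1}(\beta_2,\dots,\beta_L)$ already used in the proof of \cref{prop:curr-singlePushTASEP}. Writing $E_m:=e_m(\beta_2,\dots,\beta_L)$, the two terms carrying a factor $\beta_1$ cancel and the numerator collapses to $\beta_1\bigl(E_{N_i-1}E_{N_{i+1}}-E_{N_i}E_{N_{i+1}-1}\bigr)$, so that
\[
\aver{\eta^{(i)}_1}=\beta_1\,\frac{\displaystyle\det\begin{pmatrix}E_{N_i-1}&E_{N_i}\\ E_{N_{i+1}-1}&E_{N_{i+1}}\end{pmatrix}}{e_{(N_i,N_{i+1})}(\beta_1,\dots,\beta_L)}.
\]
Finally I would apply \eqref{dual jt}: matching the rows of this determinant to $(e_{a+b},e_{a+b+1})$ and $(e_{a-1},e_a)$ forces $a=N_{i+1}$ and $b=N_i-1-N_{i+1}=n_i-1$, so the determinant equals $s_{\langle 1^{\,n_i-1},\,2^{\,N_{i+1}}\rangle}(\beta_2,\dots,\beta_L)$, which yields the stated formula (with the conventions $e_{-1}=0$ and $s_{\langle 1^{m_1},2^{m_2}\rangle}=0$ for $m_2<0$ covering the boundary values of $i$).

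There is no real obstacle: the substance lies entirely in \cref{cor:density-singlePushTASEP} and \cref{prop:colouring}, and the rest is bookkeeping. The one point needing care is the orientation of the $2\times2$ determinant in the last step — the $N_i$-row must sit above the $N_{i+1}$-row so that \eqref{dual jt} delivers the genuine (positive) Schur polynomial rather than its negative — together with checking the two degenerate cases $i=s$ (where $N_{s+1}=0$ and the formula reduces directly to \cref{cor:density-singlePushTASEP}) and $n_i=1$ (where the Schur polynomial degenerates to $e_{N_{i+1}}$).
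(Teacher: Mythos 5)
Your proposal is correct and follows exactly the route of the paper's (very terse) proof: project via \cref{prop:colouring} to the single-species systems with $N_i$ and $N_{i+1}$ particles, take the difference of the densities from \cref{cor:density-singlePushTASEP}, and identify the resulting $2\times 2$ determinant via the dual Jacobi--Trudi identity \eqref{dual jt}; your filling-in of the cancellation of the $\beta_1$-terms is right. One caveat: the determinant you obtain is $s_{\langle 1^{\,n_i-1},\,2^{\,N_{i+1}}\rangle}(\beta_2,\dots,\beta_L)$, which is \emph{not} literally the expression $s_{\langle 1^{\,N_{j+1}},\,2^{\,n_j-1}\rangle}$ printed in the statement --- the two frequency exponents are transposed --- so you should not assert that your answer ``yields the stated formula'' without comment. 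A quick check with $L=3$, $\lambda=(2,1,0)$, species $1$ at site $1$ gives density $\beta_1\,s_{(2)}(\beta_2,\beta_3)/\bigl(e_2 e_1\bigr)$, matching your index $\langle 1^{0},2^{1}\rangle$ and not the printed $\langle 1^{1},2^{0}\rangle$ (which would give $2/9$ instead of $1/3$ in the homogeneous case); so your version is the correct one and the statement as printed contains a typo, which your write-up should flag rather than paper over.
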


\begin{proof}
We will use \cref{prop:colouring}. The density of the particle of species $j$ is the density of the particle of species $1$ in the single species PushTASEP with $M_j$ particles minus the density of the particle of species $1$ in the single species PushTASEP with $M_{j+1}$ particles. We now use \cref{cor:density-singlePushTASEP} and the dual Jacobi-Trudi identity~\eqref{dual jt} to obtain the formula.
\end{proof}

\begin{prop}
\label{prop:curr}
For the multispecies PushTASEP with content $\lambda$, the current of species $j$ for $1 \leq j \leq s$, is given by
\[
\frac{s_{\langle 1^{M_{j+1}}, 2^{m_j-1} \rangle}(\var_1,\dots,\var_n)}{
e_{(M_j, M_{j+1})}(\var_1, \dots, \var_n)}.
\]
\end{prop}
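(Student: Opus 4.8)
The plan is to follow the template of the proof of Proposition~\ref{prop:dens}, reducing everything to the single-species PushTASEP through the colouring map of Proposition~\ref{prop:colouring}. Applying the map $\phi^r$ (which sends every species $\geq r$ to a particle and every species $<r$ to a vacancy) projects the multispecies PushTASEP with content $\lambda$ onto the single-species PushTASEP with $N_r$ particles, and the family of these projections for $r=1,\dots,s$ realises the multispecies process as a basic coupling of single-species processes. In this coupling every elementary transfer carries one particle of a well-defined species across each bond, so in stationarity the flux of species-$\geq j$ particles across a fixed bond, say $(L,1)$, equals the single-species current with $N_j$ particles, and likewise with $N_{j+1}$. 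Since a particle of species exactly $j$ is one that is of species $\geq j$ but not $\geq j+1$, the current of species $j$ is the difference of these two single-species currents.

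By Proposition~\ref{prop:curr-singlePushTASEP} the single-species current with $n$ particles is $e_{n-1}(\beta_1,\dots,\beta_L)/e_n(\beta_1,\dots,\beta_L)$, so
\[
J_j \;=\; \frac{e_{N_j-1}}{e_{N_j}} \;-\; \frac{e_{N_{j+1}-1}}{e_{N_{j+1}}} \;=\; \frac{e_{N_j-1}\,e_{N_{j+1}} - e_{N_{j+1}-1}\,e_{N_j}}{e_{N_j}\,e_{N_{j+1}}},
\]
all polynomials evaluated at $(\beta_1,\dots,\beta_L)$. The denominator is by definition $e_{(N_j,N_{j+1})}(\beta_1,\dots,\beta_L)$. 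For the numerator, write $n_j = N_j - N_{j+1}$ and invoke the dual Jacobi--Trudi identity~\eqref{dual jt} with $a = N_{j+1}$ and $b = n_j-1$ (so that $a+b = N_j-1$): this identifies $e_{N_j-1}e_{N_{j+1}} - e_{N_{j+1}-1}e_{N_j}$ as the two-column Schur polynomial in the statement. This is the only computation in the argument, and it is routine; note moreover that, unlike in the density case of Proposition~\ref{prop:dens}, Proposition~\ref{prop:curr-singlePushTASEP} is already written in the variables $(\beta_1,\dots,\beta_L)$, so no preliminary manipulation relating $e_m(\beta_2,\dots,\beta_L)$ to $e_m(\beta_1,\dots,\beta_L)$ is needed before applying~\eqref{dual jt}.

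There is essentially no serious obstacle; two points merit a line of care. First, the degenerate case $n_j=0$: then there are no species-$j$ particles, the current is $0$, and indeed $N_j=N_{j+1}$ makes the numerator vanish, consistent with the stated convention that $s_{\langle 1^{m_1},2^{m_2}\rangle}=0$ when $m_2<0$. Second, the identity ``current of species $j$ $=$ (current of species $\geq j$) $-$ (current of species $\geq j+1$)'' should be justified from the dynamics of the basic coupling, where it is immediate because each elementary transfer carries a single particle of a definite species across $(L,1)$; summing the indicators of such crossings over time and taking the stationary rate gives the claimed decomposition. Everything else is parallel to the proof of Proposition~\ref{prop:dens}.
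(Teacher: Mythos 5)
Your proof is correct and follows exactly the paper's argument: the colouring of \cref{prop:colouring}, the difference $e_{N_j-1}/e_{N_j} - e_{N_{j+1}-1}/e_{N_{j+1}}$ of single-species currents from \cref{prop:curr-singlePushTASEP}, and the dual Jacobi--Trudi identity \eqref{dual jt}. One small remark: with your (correct) choice $a=N_{j+1}$, $b=n_j-1$, identity \eqref{dual jt} produces $s_{\langle 1^{n_j-1},\,2^{N_{j+1}}\rangle}$, so the exponents in the displayed statement appear transposed relative to the frequency notation $\langle 1^b,2^a\rangle$ --- your computation, rather than the displayed indexing, is the one to trust.
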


\begin{proof}
The proof follows again from the same coloring used in the proof of \cref{prop:dens}. Using \cref{prop:curr-singlePushTASEP},
the current of particles of species $j$ is
the total current of species $j$ through $s$
 given by $e_{M_j-1}/e_{M_j}$ minus 
 the total current of species $j+1$ through $s$
 given by $e_{M_{j+1}-1}/e_{M_{j+1}}$.
The difference gives the desired formula again using the dual Jacobi-Trudi identity~\eqref{dual jt}.
\end{proof}

\subsection{Two-point correlations}
We now prove the result for the nearest neighbour correlations in \cref{thm:2pt} generalising earlier work of Ayyer--Linusson for the multispecies TASEP~\cite{ayyer-linusson-2016} and related to those of Amir--Angel--Valko for the TASEP speed process~\cite{amir-angel-valko-2011}.

Two-point correlations for the stationary distribution of the  
multispecies
TASEP, which coincides with that for the multispecies
PushTASEP in the case $\var_1=\dots=\var_n$,
have been computed in \cite[Theorem 4.2]{ayyer-linusson-2016}. It is a somewhat tedious exercise to recover the results therein by setting $\var_1 = \cdots = \var_n = 1$ and using the hook-content formula~\cite[Corollary 7.21.4]{stanley-ec2} for two-column partitions. The verification of this is left to the interested reader.

We will follow the strategy of proof in \cite[Theorem 4.2]{ayyer-linusson-2016}. Since the ideas are so similar, we will omit many of the details.
The main idea is to work out the same correlation for the multispecies PushTASEP with three species of particles and use the coloring argument in \cref{prop:colouring}. 
Recall that we are working with the situation where there are $n-1$ species and one particle of each type (and one vacancy) on $n$ sites. 

\begin{prop}
\label{prop:2pt}
Fix $s, t > 0$ such that $s+t < n$.
For the multispecies PushTASEP with content 
$\langle 2^{n-s-t}, \allowbreak
1^t, 0^{s} \rangle$, the joint probability of seeing a vacancy at the first site and a $1$ at the second site is given by
\[
T_1(s, t) = \frac{\var_2 \, s_{\langle 1^{t-1}, 2^s \rangle}(\var_3, \dots, \var_n)}
{e_{(s+t, s)}(\var_1, \dots, \var_n)},
\]
and that of seeing the particle of type $1$ at the first site and a vacancy at the second site is given by
\[
T_2(s, t) = \frac{\var_2 \, s_{\langle 1^{t-1}, 2^s \rangle}(\var_3, \dots, \var_n)
+ \var_1 \var_2 \, s_{\langle 1^{t}, 2^{s-1} \rangle}(\var_3, \dots, \var_n)}
{e_{(s+t, s)}(\var_1, \dots, \var_n)}.
\]
\end{prop}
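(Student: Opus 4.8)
The plan is to reduce the computation to a weighted enumeration of $2$-line diagrams and then recognise the answer via the dual Jacobi--Trudi identity, following the scheme of \cite[Theorem~4.2]{ayyer-linusson-2016}. Write $\lambda=\langle 0^{s},1^{t},2^{L-s-t}\rangle$ for the content in the proposition, so a multiline diagram $\hat\eta\in\hOmega_\lambda$ has bottom row (row $1$) with $N_1=L-s$ particles and top row (row $2$) with $N_2=L-s-t$ particles, and $Z_\lambda=e_{L-s}(\beta_1,\dots,\beta_L)\,e_{L-s-t}(\beta_1,\dots,\beta_L)$ by \eqref{Zlambda}. By \cref{thm:stat prob sum}, both $T_1(s,t)$ and $T_2(s,t)$ equal $Z_\lambda^{-1}$ times the sum of $\wt(\hat\eta)=\prod_i\beta_i^{v_i(\hat\eta)}$ over those $\hat\eta$ whose bottom-row projection $\Pi(\hat\eta)$ takes the prescribed values at sites $1$ and $2$. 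Since $v_i(\hat\eta)$ just counts the rows containing a particle in column $i$, the weight factorises over rows, so under $\hat\pi$ the two rows are \emph{independent}, row $r$ being distributed as the stationary configuration of a single-species PushTASEP with $N_r$ particles (by \cref{prop:ss-singlePushTASEP}). I would write $A$ for row $1$ and $B$ for row $2$.

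From the definition of $\Pi$ in the two-line case, $\Pi(\hat\eta)_j=0$ exactly when $A_j=0$, equals $2$ exactly when $A_j=1$ and the bully path of some particle of $B$ terminates at column $j$, and equals $1$ when $A_j=1$ but no bully path reaches column $j$. Hence the event defining $T_1$ is ``$A$ has a vacancy at column $1$ and an \emph{unmatched} particle at column $2$'', and the event defining $T_2$ is ``$A$ has an unmatched particle at column $1$ and a vacancy at column $2$''. The first step of the real work is to describe exactly when a given particle of $A$ is unmatched: for two lines, the Ferrari--Martin matching can be read off from a cyclic lattice-path / parenthesis-matching on the superposition of $A$ and $B$ (treating a $B$-particle as an opening bracket and an $A$-particle as a closing one, read from a cyclic minimum of the height function), the $t=N_1-N_2$ unmatched $A$-particles being those closing brackets seen with an empty stack.

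The core of the argument is then the weighted enumeration. I would condition on the first particle of $A$ clockwise from the relevant gap, and on the location of the bully path of $B$ relevant to column $1$ (resp.\ column $2$) — in particular on whether columns $1$ and $2$ are serviced by the same particle of $B$ or by different ones. Conditioning on this local data splits the remaining freedom in $A$ and in $B$ into independent choices over disjoint cyclic arcs, and each such choice contributes an elementary symmetric polynomial $e_k$ in the $\beta_i$'s of that arc; this is exactly the ``$\beta$-deformation'' of the binomial counting in \cite{ayyer-linusson-2016}. Summing, and telescoping with the recursion for $e_m$ already used in the proof of \cref{prop:curr-singlePushTASEP}, one assembles alternating sums of products $e_ae_b$; applying the dual Jacobi--Trudi identity \eqref{dual jt} turns these into the two-column Schur polynomials $s_{\langle 1^{t-1},2^{s}\rangle}(\beta_3,\dots,\beta_L)$ and $s_{\langle 1^{t},2^{s-1}\rangle}(\beta_3,\dots,\beta_L)$, while the monomials $\beta_2$ and $\beta_1\beta_2$ come from the forced occupancies at columns $1$ and $2$. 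Dividing by $Z_\lambda$ and cancelling leaves the stated expressions with denominator $e_{(s+t,s)}(\beta_1,\dots,\beta_L)$.

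The step I expect to be the main obstacle is precisely this bookkeeping of the cyclic bully-path matching at \emph{two} prescribed sites at once: one must track whether the bully paths servicing columns $1$ and $2$ emanate from one or two particles of $B$, and handle the wrap-around of the ring, so that the weighted sum genuinely factorises over arcs. This is the part that is ``somewhat tedious'' in the language of the paper and, as in \cite{ayyer-linusson-2016}, is where essentially all of the case analysis lives; once the factorised form is in hand, the identification of the Schur polynomials via \eqref{dual jt} and the final simplification are routine. As a consistency check one notes that $T_2(s,t)-T_1(s,t)=\beta_1\beta_2\,s_{\langle 1^{t},2^{s-1}\rangle}(\beta_3,\dots,\beta_L)\big/e_{(s+t,s)}(\beta_1,\dots,\beta_L)\ge 0$, reflecting that the particle forced at column $1$ in the $T_2$ event may additionally be matched, producing the extra Schur term; and the answers are manifestly symmetric in $\beta_3,\dots,\beta_L$, as required by \cref{thm:multipath}.
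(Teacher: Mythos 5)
Your reduction is the same as the paper's: by \cref{thm:stat prob sum} both probabilities are $Z_\lambda^{-1}$ times a weighted sum over two-line diagrams, the rows are independent under $\hat\pi$, and the events are correctly translated as ``vacancy at column $1$ and unmatched row-$1$ particle at column $2$'' (resp.\ the reverse). The genuine gap is that the weighted enumeration itself --- which is the entire content of the proposition --- is only announced, not carried out; you explicitly defer the ``bookkeeping of the cyclic bully-path matching'' as the main obstacle. The paper does not do that bookkeeping by arc decompositions, telescoping $e_m$ recursions and the dual Jacobi--Trudi identity \eqref{dual jt} at all: following \cite[Theorem~4.2]{ayyer-linusson-2016}, it exhibits a weight-preserving bijection between the constrained two-line diagrams and semistandard Young tableaux of the two-column shapes $\langle 1^{t-1},2^{s}\rangle$ and $\langle 1^{t},2^{s-1}\rangle$, so the Schur polynomial appears directly from its combinatorial (SSYT) definition; \eqref{dual jt} only enters later, in the derivation of \cref{thm:2pt} from this proposition. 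Your route is not obviously wrong, but it is a different and substantially heavier calculation whose feasibility you have not demonstrated, so as written the proof is incomplete exactly where the work is.

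Two concrete warnings if you pursue your version. First, note that the stated answers are in ``complementary'' form: the natural normalising constant from \eqref{Zlambda} is $Z_\lambda=e_{(L-s,\,L-s-t)}$ and the particle-counting weight of every diagram has degree $2L-2s-t$, whereas the displayed denominators are $e_{(s+t,s)}$ and the Schur factors have degree $2s+t-1$ or $2s+t-2$. The objects being enumerated are effectively the \emph{vacancy} columns (row $1$ has $s$ vacancies, row $2$ has $s+t$), so a particle-based arc accounting will land on a dual expression that still has to be converted; your claim that ``the monomials $\beta_2$ and $\beta_1\beta_2$ come from the forced occupancies at columns $1$ and $2$'' does not survive a degree count in the raw particle normalisation. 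Second, your closing interpretation of $T_2-T_1$ is off: the row-$1$ particle at column $1$ in the $T_2$ event is by definition unmatched (it carries label $1$); the extra term $\beta_1\beta_2\,s_{\langle 1^{t},2^{s-1}\rangle}$ comes from the diagrams in which row $2$ is occupied at column $2$, i.e.\ the second of the two diagram types in the paper's proof. This is harmless for the symmetry check but signals that the matching analysis at the two marked columns needs more care than the sketch suggests.
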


\begin{proof}
For the three species model, stationary probabilities are computed using multiline diagrams with two rows such that there are a total of $s$ $\occ$'s in the first row and $s + t$ $\occ$'s in the second row. For the first case, we need to sum over the weights of all multiline diagrams of the form
\[
\begin{array}{ccc}
\vac & \vac & \cdots \\
\vac & \occ & \cdots \\
\hline
0 & 1 & \cdots
\end{array}.
\]
Following the strategy in the proof of \cite[Theorem 4.2]{ayyer-linusson-2016}, we see that such configurations are in bijection with semistandard tableaux of shape $\langle 1^{t-1}, 2^{s} \rangle$ and the weights are closely related to the content of the tableaux so that the sum is precisely the Schur polynomial indexed by this partition. For the second case, we sum over two kinds of multiline diagrams, 
\[
\begin{array}{ccc}
\vac & \vac & \cdots \\
\occ & \vac & \cdots \\
\hline
1 & 0 & \cdots
\end{array}
\quad
\text{and}
\quad
\begin{array}{ccc}
\vac & \occ & \cdots \\
\occ & \vac & \cdots \\
\hline
1 & 0 & \cdots
\end{array},
\]
with the same restriction. By similar analysis as above, the sums of weights of such configurations turn out to be Schur polynomials $s_{\langle 1^{t-1}, 2^s \rangle}(\var_3, \dots, \var_n)$ and $s_{\langle 1^{t}, 2^{s-1} \rangle}(\var_3, \dots, \var_n)$ respectively.
\end{proof}

\begin{proof}[Proof of \cref{thm:2pt}]
Clearly $\aver{\eta^{(i)}_1 \eta^{(i)}_2} = 0$ since there is only one particle of each type.
First, we consider $j < i$. Using \cref{prop:2pt}, the coloring argument in \cref{prop:colouring} and inclusion-exclusion principle, we find that 
\begin{multline*}
\aver{\eta^{(j)}_1 \eta^{(i)}_2} = T_1(n-i-1, i-j) 
- T_1(n-i-1, i-j+1) \\
- T_1(n-i, i-j-1) + T_1(n-i,i-j),
\end{multline*}
where $T_1$ is given in \cref{prop:2pt}. The denominator is a symmetric polynomial in $\var_1, \dots, \var_n$, while the numerator is a symmetric polynomial only in $\var_3, \dots, \var_n$. Taking a common denominator and simplifying leads, after using the dual Jacobi-Trudi identity~\eqref{dual jt} and a lot of tedious manipulation, to the desired result.

For $j > i$, we have a similar inclusion-exclusion formula,
\begin{multline*}
\aver{\eta^{(j)}_1 \eta^{(i)}_2} = T_2(n-j-1, j-i) 
- T_2(n-j-1, j-i+1) \\
- T_2(n-j, j-i-1) + T_2(n-j,j-i).
\end{multline*}
Here, again after many calculations in the same vein, we get the stated answer involving $g(j,i)$. The only difference is that when $j = i+1$, there is a compensatory factor, which is also shown. The details are left to the interested reader. 
\end{proof}

\section*{Acknowledgements}
We thank Gidi Amir, Pablo Ferrari, Omer Angel, Leonid Petrov and Lauren Williams for very helpful discussions. 
The first author (AA) acknowledges support from SERB Core grant CRG/2021/001592 and the DST FIST program - 2021 [TPN - 700661].

\bibliography{lrep}
\bibliographystyle{alpha}

\end{document}